\theoremstyle{definition}
 \newtheorem{example}{\protect\examplename}
\theoremstyle{plain}
\newtheorem{assumption}{\protect\assumptionname}
\theoremstyle{remark}
\newtheorem*{rem*}{\protect\remarkname}
\theoremstyle{plain}
\newtheorem{thm}{\protect\theoremname}
\theoremstyle{plain}
\newtheorem{prop}{\protect\propositionname}
\theoremstyle{plain}
\newtheorem{lem}{\protect\lemmaname}
\providecommand{\assumptionname}{Assumption}
\providecommand{\examplename}{Example}
\providecommand{\lemmaname}{Lemma}
\providecommand{\propositionname}{Proposition}
\providecommand{\remarkname}{Remark}
\providecommand{\theoremname}{Theorem}
\begin{document}
\global\long\def\R{\mathbb{R}}%

\global\long\def\ii{\mathrm{i}}%

\global\long\def\N{\mathbb{N}}%

\global\long\def\C{\mathbb{C}}%

\global\long\def\supp{\operatorname{supp}}%

\global\long\def\spn{\operatorname{span}}%

\global\long\def\ran{\operatorname{ran}}%

\global\long\def\rank{\operatorname{rank}}%

\global\long\def\div{\operatorname{div}}%

\global\long\def\supess{\operatorname{supess}}%

\global\long\def\H{\mathcal{H}}%

\global\long\def\k{\omega}%

\global\long\def\phi{\varphi}%

\global\long\def\epsilon{\varepsilon}%

\global\long\def\B{\mathcal{B}}%

\global\long\def\E{\mathbb{E}}%

\global\long\def\D{E}%

\global\long\def\Cp{C_{1}}%

\global\long\def\Cpp{C_{2}}%

\global\long\def\Cppp{C_{3}}%

\global\long\def\P{\mathbb{P}}%

\global\long\def\l{\ell}%

\global\long\def\e{\mathbf{e}}%

\global\long\def\one{\mathds{1}}%

\global\long\def\bo{\partial\Omega}%

\subjclass[2020]{35J25, 35B38, 35R30}
\title[Non-zero constraints in elliptic PDE with random boundary values]{Non-zero constraints in elliptic PDE with random boundary values and
applications to hybrid inverse problems}
\author{Giovanni S. Alberti}
\address{MaLGa center, Department of Mathematics, University of Genoa, Via
Dodecaneso 35, 16146 Genova, Italy}
\email{giovanni.alberti@unige.it}
\thanks{This material is based upon work supported by the Air Force Office
of Scientific Research under award number FA8655-20-1-7027. The author
is member of the ``Gruppo Nazionale per l'Analisi Matematica, la Probabilità
e le loro Applicazioni'' (GNAMPA), of the ``Istituto Nazionale di
Alta Matematica'' (INdAM)}
\date{20th September 2022}
\begin{abstract}
Hybrid inverse problems are based on the interplay of two types of
waves, in order to allow for imaging with both high resolution and
high contrast. The inversion procedure often consists of two steps:
first, internal measurements involving the unknown parameters and
some related quantities are obtained, and, second, the unknown parameters
have to be reconstructed from the internal data. The reconstruction
in the second step requires the solutions of certain PDE to satisfy
some non-zero constraints, such as the absence of nodal or critical
points, or a non-vanishing Jacobian.

In this work, we consider a second-order elliptic PDE and show that
it is possible to satisfy these constraints with overwhelming probability
by choosing the boundary values randomly, following a sub-Gaussian
distribution. The proof is based on a new quantitative estimate for
the Runge approximation, a result of independent interest.
\end{abstract}

\keywords{Hybrid inverse problems, coupled-physics imaging, photoacoustic tomography,
non-zero constraints, Runge approximation, elliptic equations.}
\maketitle

\section{Introduction}

Many inverse problems for partial differential equations (PDE) are
severely ill-posed, meaning that the best possible stability is of
logarithmic type \cite{alessandrini-1988-stability,MANDACHE-01}.
This phenomenon is typically due do the infinite smoothing effect
of the PDE involved, and has a big impact in the implementation, because
only low-resolution reconstructions are possible, despite the fact
that the parameters under investigation exhibit high contrast and
are relevant for the applications \cite{WIDLAK-SCHERZER-2012,WANG-ANASTASIO-HANDBOOK-2011}.
Several examples include the Calderón problem for electrical impedance
tomography (EIT) \cite{CALDERON-1980,UHLMANNIP-2009}, inverse scattering
problems \cite{COLTON-KRESS-98} and optical tomography \cite{1999-arridge}.
On the other hand, other modalities, such as ultrasonography (modelled
by the wave equation, which preserves singularities) and magnetic
resonance imaging (MRI) exhibit high resolution, but may have low
contrast in certain settings.

In order to overcome this limitation, a class of inverse problems
has been extensively studied over the last decade. These are called
hybrid or coupled-physics problems, because they involve the interplay
of two physical modalities, one providing high-contrast, and one providing
high resolution \cite{kuchment-2012,BAL-2012,2017-ammari-etal,Alberti2018}.
The most known hybrid modality is photoacoustic tomography (PAT) \cite{KUCHMENT-KUNYANSKY-HANBOOK-2011,WANG-ANASTASIO-HANDBOOK-2011},
in which light and ultrasounds are combined to image the high-contrast
optical absorption by making high-resolution ultrasonic measurements.
Many other techniques have been considered, including thermoacoustic
tomography (combining microwaves and ultrasounds) \cite{KUCHMENT-KUNYANSKY-HANBOOK-2011,WANG-ANASTASIO-HANDBOOK-2011},
acousto-electric tomography (combining ultrasound-induced deformations
and electrical measurements) \cite{LAVANDIER-JOSSINET-GATHIGNOL-00,ZHANG-WANG-2004,AMMARI-BONNETIER-CAPDEBOSCQ-TANTER-FINK-2008,adesokan-etal-2018}
and magnetic resonance electrical impedance tomography (combining
MRI and EIT) \cite{seo-woo-2011}.

Because of the combination of two modalities, the inversion process
in hybrid problems usually consists of two steps. In a first step,
by solving an inverse problem for the high-resolution/low-contrast
modality, some internal data is reconstructed. This typically contains
the unknown parameter(s), as well as the solution(s) of the corresponding
PDE related to the low-resolution/high-contrast modality (for instance,
in PAT, the internal data is the product between the optical absorption
$\mu(x)$ and the light intensity $u(x)$). In the second step, the
actual unknown of the problem ($\mu(x)$, in PAT) has to be reconstructed
from the internal data, by solving an inverse problem for the low-resolution/high-contrast
modality. While a lot of research has been done on the first step,
from both the theoretical and experimental points of view, the second
step has received far less attention.

In this paper, we study a crucial aspect for solving the inverse problem
in the second step: the solutions to the PDE under consideration should
satisfy certain non-zero constraints, depending on the problem, such
as the absence of nodal or critical points, or a non-vanishing Jacobian.
It is worth observing that, by using unique continuation estimates,
it is possible to have uniqueness and stability of the inverse problem
even when these non-zero constraints are not satisfied \cite{2014-alessandrini,2015-alessandrini-dicristo-francini-vessella,2015-choulli-triki,2019-choulli-triki,2022-bonnettier-etal}.
However, the non-zero constraints allow for optimal stability estimates
(of Lipschitz type) and, very often, explicit reconstruction formulae
\cite{BAL-2012,Alberti2018}. In very simplified terms (with the risk
of not capturing the complete picture), when these non-zero constraints
are satisfied, it is possible to avoid the problem of ``division by
zero''.

There exist several methods to construct suitable solutions that satisfy
the relevant non-zero constraints, including methods based on generalisations
of the Radó-Kneser-Choquet theorem \cite{alessandrini-1986,ALESSANDRINI-NESI-01,Bauman-Marini-Nesi-2001,ALESSANDRINI-NESI-2015},
on complex geometrical optics solutions (CGO) \cite{BAL-2012,BAL-UHLMANN-2010,BAL-REN-2011,BAL-UHLMANN-2013,BAL-BONNETIER-MONARD-TRIKI-2013},
on the use of multiple frequencies \cite{ALBERTI-IP-2013,albertigsII,ALBERTI-b-2014,alberti-genericity},
on dynamical systems \cite{2013-bal-courdurier} and on the Runge
approximation \cite{BAL-UHLMANN-2013,2022-alberti-capdeboscq}. All
these techniques have several drawbacks, for example the generalisations
of the Radó-Kneser-Choquet theorem are valid only in 2D and for coercive
elliptic problems \cite{CAPDEBOSCQ-15,alberti-bal-dicristo-2016,Alberti2018},
the construction with CGO depends on the unknown coefficients, which
have to be very smooth and isotropic, and the use of multiple frequencies
works only with frequency-dependent problems.

The approach based on the Runge approximation \cite{LAX-1956,MALGRANGE-1955-56}
is very flexible, because it is valid with many PDE, allows for anisotropic
coefficients, and the smoothness assumptions are very mild, since
only the unique continuation property is needed. However, in the original
formulation \cite{BAL-UHLMANN-2013}, the suitable solutions are not
explicitly constructed (the existence follows from the Hahn-Banach
theorem, hence from the axiom of choice) and depend on the unknown
coefficients, as with CGO. By combining this approach with the Whitney
embedding theorem, it is possible to prove that the set of suitable
solutions is open and dense \cite{2022-alberti-capdeboscq}, with
explicit estimates on the number of solutions needed (typically very
small). However, the ``open and dense'' condition does not imply anything
on the ``size'' of this set, since an open and dense set may have
arbitrarily small measure, which may make it hard to construct, in
practice, these solutions. Furthermore, no quantitative lower bound
for the constraints is given, potentially causing the problem of ``division
by a very small number'' (again, by oversimplifying the matter).

The focus of this work is on overcoming these limitations. We consider
a second-order elliptic PDE and prove that, by choosing random boundary
values, sampled with respect to a fixed sub-Gaussian distribution,
the corresponding solutions will satisfy the non-zero constraints
with overwhelming probability (see Theorem~\ref{thm:constraints}).
In this way, the suitable boundary conditions are explicitly constructed,
and independent of the unknown parameters. Furthermore, a quantitative
lower bound is derived. The price to pay is a slightly larger number
of solutions needed if compared to the one in \cite{2022-alberti-capdeboscq},
which is due to the use of concentration inequalities.

A crucial ingredient of the proof of Theorem~\ref{thm:constraints}
is a new result on quantitative Runge approximation. Classical Runge
approximation allows for approximating a local solutions $h$ in $D$
to a PDE by a global solution $u$ in $\Omega$ (with $D\Subset\Omega$)
\cite{LAX-1956,MALGRANGE-1955-56,1962-browder}, and is based on the
unique continuation property \cite{alessandrini-rondi-rosset-vessella-2009}.
This construction was made quantitative in \cite{ruland-salo-2019},
where the authors obtain an estimate on $\|u|_{\bo}\|_{H^{1/2}(\bo)}$.
In the second main result of this paper, Theorem~\ref{thm:quantitative runge},
we obtain an estimate of a stronger norm of $u|_{\bo}$, which is
needed in the proof of Theorem~\ref{thm:constraints}. This result
is also of independent interest, since the Runge approximation finds
applications not only to hybrid problems, but also to many other inverse
problems for PDE (see \cite{ruland-salo-2019} and references therein).

This work is structured as follows. In Section~\ref{sec:non-zero}
we describe the setup we consider, and state the main result of this
work on satisfying non-zero constraints in elliptic PDE with random
boundary measurements, Theorem~\ref{thm:constraints}. In Section~\ref{sec:Quantitative-Runge-approximation}
we state the result on quantitative Runge approximation, Theorem~\ref{thm:quantitative runge}.
These two theorems are then proved in Sections~\ref{sec:Proof-of-Theorem contraint}
and \ref{sec:Proof-of-Runge}, respectively.

\section{Non-zero constraints in PDE with random boundary values\label{sec:non-zero}}

\subsection{The elliptic equation\label{subsec:The-elliptic-equation}}

In this paper, we consider a second-order elliptic equation of the
form
\begin{equation}
Lu:=-\div(a\nabla u)+qu=0\quad\text{in }\Omega,\label{eq:PDE}
\end{equation}
where we assume that 
\begin{itemize}
\item the domain $\Omega\subseteq\R^{d}$, $d\ge2$, is bounded, Lipschitz
and connected;
\item the diffusion coefficient $a\in W^{1,\infty}\left(\Omega;\mathbb{R}^{d\times d}\right)$,
is symmetric ($a^{T}=a$) and satisfies
\begin{equation}
a\left(x\right)\xi\cdot\xi\geq\Lambda^{-1}\left|\xi\right|^{2},\qquad x\in\Omega,\,\xi\in\R^{d},\label{eq:coercive}
\end{equation}
and
\begin{equation}
\|a\|_{W^{1,\infty}(\Omega)}\le\Lambda\label{eq:a-Lambda}
\end{equation}
for some $\Lambda\ge1$;
\item the potential $q\in L^{\infty}(\Omega)$ satisfies
\begin{equation}
\|q\|_{\infty}\le\Lambda;\label{eq:q-Lambda}
\end{equation}
\item $0$ is not a Dirichlet eigenvalue of $L$ in $\Omega$ and
\begin{equation}
\|L^{-1}\|_{H^{-1}(\Omega)\to H_{0}^{1}(\Omega)}\le\Lambda,\label{eq:0-eig}
\end{equation}
where $H^{-1}(\Omega)$ denotes the dual of $H_{0}^{1}(\Omega)=\{u\in H^{1}(\Omega):u=0\text{ on }\partial\Omega$\}.
\end{itemize}
Unless specified, all function spaces considered in this paper consist
of real-valued functions.

\subsection{Non-zero constraints}

Let $\Omega'\Subset\Omega$ be a Lipschitz domain where we are interested
in satisfying the non-zero constraints. The results presented here
would work also with $\Omega'=\Omega$, but with additional technicalities.
We are looking for boundary conditions $\phi_{1},\dots,\phi_{n}\in H^{1/2}(\bo)$
such that the corresponding solutions $u_{1},\dots,u_{n}\in H^{1}(\Omega)$
to
\begin{equation}
\left\{ \begin{array}{ll}
Lu_{i}=-\div(a\nabla u_{i})+qu_{i}=0 & \text{in}\;\Omega,\\
u_{i}=\varphi_{i} & \text{on}\;\bo,
\end{array}\right.\label{eq:ui dirichelt}
\end{equation}
satisfy the constraint
\begin{equation}
\zeta(u_{1},\dots,u_{n})(x)\neq0,\label{eq:constraint}
\end{equation}
in $\Omega'$, locally or globally. Here, $n\ge1$ and the map
\[
\zeta\colon C^{1,1/2}(\overline{\Omega'})^{n}\longrightarrow C^{0,1/2}(\overline{\Omega'})
\]
is multilinear and bounded. Furthermore, we require $\zeta$ to be
local, in the sense that for every $x\in\overline{\Omega'}$ and for
every $u,v\in C^{1,1/2}(\overline{\Omega'})^{n}$ there exists $r>0$
such that, if $u|_{B(x,r)\cap\overline{\Omega'}}=v|_{B(x,r)\cap\overline{\Omega'}}$,
then $\zeta(u)(x)=\zeta(v)(x)$. 

Note that, thanks to (\ref{eq:a-Lambda}), $u_{i}\in C^{1,1/2}(\overline{\Omega'})$
by classical elliptic regularity theory \cite[Theorem 8.32]{GILBARG-2001},
and so (\ref{eq:constraint}) is well-defined. We could have considered
$C^{1,\alpha}(\overline{\Omega'})$ with any $\alpha\in(0,1),$ and
we chose $\alpha=1/2$ only for simplicity. We decided to consider
only Dirichlet boundary values in order to simplify the exposition,
but the whole approach would easily extend to other types of boundary
conditions.

\subsection{Examples\label{subsec:Examples}}

As examples of this general framework, we consider here some particular
cases of maps $\zeta$, and we mention the corresponding applications
to the reconstruction procedures for several hybrid inverse problems.
We present here only simplified settings, we omit the details and
provide only local arguments, in order to better illustrate the role
of the non-zero constraints with toy models. For more complete models
and the full derivations see, e.g., \cite{bal-uhlmann-reconstructions-2012,BAL-2012,kuchment-2012,BAL-UHLMANN-2013,ALBERTI-b-2014,2017-ammari-etal,Alberti2018}
and the references therein.
\begin{example}[Nodal points]
\label{exa:nodal}Let $n=1$ and $\zeta_{1}(u)=u$. This simply yields
the constraint 
\begin{equation}
u(x)\neq0,\label{eq:nodal}
\end{equation}
namely, the (local or global) absence of nodal points. This constraint
appears, for example, in dynamic elastography \cite{MCLAUGHLIN-OBERAI-YOON-2012}
and in quantitative photoacoustic tomography (QPAT) \cite{KUCHMENT-KUNYANSKY-HANBOOK-2011,2012-bal-ren,2015-alberti-ammari}.

For the sake of illustration, let us consider a simplified model of
QPAT in which the absorption coefficient $\mu\in L^{\infty}(\Omega)$,
appearing as a potential in the diffusion equation
\begin{equation}
-\Delta u+\mu u=0\quad\text{in }\Omega,\label{eq:diffusion}
\end{equation}
has to be reconstructed from the knowledge of the internal energy
\[
H=\mu u\quad\text{in }\Omega.
\]
If the boundary condition of $u$ is known, by solving the Poisson
equation
\[
\Delta u=H\quad\text{in }\Omega,
\]
we can recover $u$ in $\Omega$. Finally, $\mu$ can be recovered
in $x\in\Omega$ by using
\begin{equation}
\mu(x)=\frac{H(x)}{u(x)},\label{eq:division}
\end{equation}
provided that the constraint (\ref{eq:nodal}) is satisfied.
\end{example}
\begin{example}[Critical points]
Let $n=1$ and $\zeta_{2}(u)=\partial_{x_{1}}u$. This choice for
$\zeta$ corresponds to the constraint
\[
\partial_{x_{1}}u(x)\neq0,
\]
which implies the absence of critical points. We consider the problem
of reconstructing $a$ in
\[
-\div(a\nabla u)=0\quad\text{in }\Omega,
\]
from the knowledge of the potential $u$ in $\Omega$, as in \cite{alessandrini-1986,giordano-nickl-2020}.
This problem is motivated, for instance, by the inverse problem of
aquifer hydrology \cite{neuman-yakowitz-1979}, but is very related
to the inverse problems of current density imaging \cite{WOO-LEE-SY-MUN-1994,bal-guo-monard-2014}
and of acousto-electric tomography \cite{AMMARI-BONNETIER-CAPDEBOSCQ-TANTER-FINK-2008,adesokan-etal-2018,CAPDEBOSCQ-FEHRENBACH-DEGOURNAY-KAVIAN-09}.
If $a$ is scalar, we obtain
\begin{equation}
\nabla(\log a)\cdot\nabla u=-\Delta u\quad\text{in }\Omega.\label{eq:log a}
\end{equation}
Since $\nabla u$ and $\Delta u$ are known, this is a first-order
linear PDE in $\log a$, and can be solved by using the method of
characteristics if $u$ does not have critical points. 
\end{example}
\begin{example}[Jacobian]
\label{exa:Jacobian}Let $n=d$ and $\zeta_{3}(u_{1},\dots,u_{d})=\det\begin{bmatrix}\nabla u_{1} & \cdots & \nabla u_{d}\end{bmatrix}$,
which yields the constraint
\begin{equation}
\det\begin{bmatrix}\nabla u_{1} & \cdots & \nabla u_{d}\end{bmatrix}(x)\neq0.\label{eq:jacobian}
\end{equation}
In other words, we look for a non-vanishing Jacobian. A simple application
of this constraint is in the same inverse problem considered in the
previous example. Suppose that we have $d$ measurements $u_{1},\dots,u_{d}$.
By (\ref{eq:log a}) we obtain
\[
\left(\nabla(\log a)\right)^{T}\begin{bmatrix}\nabla u_{1} & \cdots & \nabla u_{d}\end{bmatrix}=-\begin{bmatrix}\Delta u_{1} & \cdots & \Delta u_{d}\end{bmatrix}\quad\text{in }\Omega.
\]
If (\ref{eq:jacobian}) holds true, we can immediately obtain $\nabla(\log a)$,
hence $a$ up to a multiplicative constant, by inverting the matrix
$\begin{bmatrix}\nabla u_{1} & \cdots & \nabla u_{d}\end{bmatrix}$
in the above identity.
\end{example}
\begin{example}[Augmented Jacobian]
Let $n=d+1$ and 
\[
\zeta_{4}(u_{1},\dots,u_{d+1})=\det\begin{bmatrix}u_{1} & \cdots & u_{d+1}\\
\nabla u_{1} & \cdots & \nabla u_{d+1}
\end{bmatrix}.
\]
This yields the constraint
\begin{equation}
\det\begin{bmatrix}u_{1} & \cdots & u_{d+1}\\
\nabla u_{1} & \cdots & \nabla u_{d+1}
\end{bmatrix}(x)\neq0,\label{eq:augmented}
\end{equation}
the so-called non-vanishing \emph{augmented} Jacobian. This constraint
appears, for instance, in QPAT. As in Example~\ref{exa:nodal}, consider
again the inverse problem of recovering the absorption coefficient
$\mu$ in (\ref{eq:diffusion}) from the internal energy
\[
H(x)=\Gamma(x)\mu(x)u(x),
\]
where $\Gamma$ is the Grüneisen parameter, which is now supposed
unknown. Suppose we have at our disposal $d+1$ measurements $H_{i}=\Gamma\mu u_{i}$
with $u_{1}\neq0$ in $\Omega$ (see the constraint in Example~\ref{exa:nodal}).
Setting $v_{i}=u_{i}/u_{1}$, it is easy to show that
\[
-\div(u_{1}^{2}\nabla v_{i})=0\quad\text{in }\Omega.
\]
Further, (\ref{eq:augmented}) implies that
\[
\det\begin{bmatrix}\nabla v_{2} & \cdots & \nabla v_{d+1}\end{bmatrix}(x)\neq0,
\]
see \cite{Alberti2018,AMMARI-CAPDEBOSCQ-DEGOURNAY-ROZANOVA-TRIKI-2011}.
Thus, since $v_{i}=H_{i}/H_{1}$ is known, $u_{1}^{2}$ may be recovered
by arguing as in Example~\ref{exa:Jacobian}. Once $u_{1}$ is known,
$\mu$ may be recovered as in Example~\ref{exa:nodal}, see (\ref{eq:division}).
\end{example}

\subsection{Main result}

Before giving our main result, we need to state two hypotheses. In
the first one, we need to assume that, at least in the case of constant
coefficients, the constraint (\ref{eq:constraint}) can be satisfied.
\begin{assumption}
\label{assu:runge-zeta}Let $\D>0$. We assume that for every $x_{0}\in\overline{\Omega'}$
there exist $u_{1}^{0},\dots,u_{n}^{0}\in C^{1,1/2}(\overline{\Omega})$
such that $\div(a(x_{0})\nabla u_{i}^{0})=0$ in $\Omega$, $\|u_{i}^{0}\|_{C^{1,1/2}(\overline{\Omega})}\le\D$
for every $i=1,\dots,n$ and 
\begin{equation}
|\zeta(u_{1}^{0},\dots,u_{n}^{0})(x_{0})|\ge1.\label{eq:assu-runge}
\end{equation}
\end{assumption}
\begin{rem*}
We explicitly observe that this assumption is trivially satisfied
for all $a$ and for all the constraints introduced in $\S$\ref{subsec:Examples}.
Indeed, the functions $1$, $x_{1},\dots,x_{d}$ trivially satisfy
the PDE $\div(a(x_{0})\nabla u_{i}^{0})=0$ (because their gradients
are constant), and
\[
\zeta_{1}(1)=\zeta_{2}(x_{1})=\zeta_{3}(x_{1},\dots,x_{d})=\zeta_{4}(1,x_{1},\dots,x_{d})=1,
\]
and so (\ref{eq:assu-runge}) is trivially satisfied.
\end{rem*}
The key aspect of the approach presented in this paper lies in the
random choice for the boundary conditions in (\ref{eq:ui dirichelt})
in order to satisfy (\ref{eq:constraint}), with respect to the following
distribution.
\begin{assumption}
\label{assu:phi}Let $\varphi$ be a square-integrable random variable
in $H^{\frac{1}{2}}(\bo)$, and let $\nu$ denote the corresponding
distribution, so that $\varphi\sim\nu$. We assume that $\phi$ is
sub-Gaussian, has mean $0$ and that its covariance operator $\Sigma\colon H^{\frac{1}{2}}(\bo)\to H^{\frac{1}{2}}(\bo)$
is injective.
\end{assumption}
\begin{rem*}
Let us make this assumption more precise. Given a probability space
$(X,\mathcal{F},\mu)$, a random variable in $H^{\frac{1}{2}}(\bo)$
is a measurable map $\phi\colon X\to H^{\frac{1}{2}}(\bo)$. We use
$\phi$ to push-forward the measure $\mu$ on $X$ to a measure $\nu$
on $H^{\frac{1}{2}}(\bo)$. The covariance operator is defined by
$\Sigma=\E[\phi\otimes\phi]$, and is self-adjoint, positive and trace-class.

We follow \cite{fukuda-1990,2020-giorgobani-etal} for defining a
sub-Gaussian random variable in a Hilbert space. We say that $\phi$
is sub-Gaussian if there exists $C>0$ such that
\[
\tau\left(\langle\phi,\psi\rangle_{H^{\frac{1}{2}}(\bo)}\right)\le C\left(\E|\langle\phi,\psi\rangle_{H^{\frac{1}{2}}(\bo)}|^{2}\right)^{\frac{1}{2}},\qquad\psi\in H^{\frac{1}{2}}(\bo),
\]
where, for a real random variable $\xi,$ we define
\[
\tau(\xi)=\inf\{a\ge0:\E\,e^{t\xi}\le e^{\frac{t^{2}}{2}a^{2}}\;\text{for every}\;t\in\R\}.
\]
\end{rem*}
The simplest choice for $\phi$ is any non-degenerate Gaussian random
variable in $H^{1/2}(\bo)$. More explicitly, $\phi$ may be expressed
as
\begin{equation}
\varphi=\sum_{k\in\N}a_{k}e_{k},\label{eq:phi Gaussian}
\end{equation}
where $\{e_{k}\}_{k}$ is a fixed orthonormal basis of $H^{\frac{1}{2}}(\bo)$
and $a^{k}\sim N(0,\sigma_{k}^{2})$ are independent real Gaussian
variables, with $\sigma_{k}>0$ for every $k$ (since $\Sigma$ is
injective) and $\sum_{k}\sigma_{k}<+\infty$ (since $\Sigma$ is a
trace-class operator). Decompositions of the form (\ref{eq:phi Gaussian}),
which will appear several times in the sequel, are called Karhunen--Loève
expansions \cite{2019-steinwart}, and converge in norm.

The main result of this section reads as follows.
\begin{thm}
\label{thm:constraints}There exist $\Cp,\Cpp,\Cppp>0$ depending
only on $\Omega$, $\Omega'$, $\Lambda$, $\D$, $\zeta$ and $\nu$
such that the following is true. Take $N\in\N,$ $N\ge n^{\frac{1}{d-1}}$,
and let $\phi_{i}^{l}\sim\nu$ be sampled i.i.d.\ in $H^{\frac{1}{2}}(\bo)$
for $i=1,\dots,n$ and $l=1,\dots,N$. Then, with probability greater
than
\[
1-\Cp N^{d}\exp(-\Cpp N^{\frac{1}{n}})
\]
we have that
\begin{equation}
\max_{l=1,\dots,N}|\zeta(u_{1}^{l},\dots,u_{n}^{l})(x)|\ge\Cppp,\qquad x\in\overline{\Omega'},\label{eq:max}
\end{equation}
where $u_{i}^{l}$ is the solution to (\ref{eq:ui dirichelt}) with
boundary condition $\phi_{i}^{l}$.
\end{thm}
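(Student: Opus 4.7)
The plan is to combine the quantitative Runge approximation of Theorem~\ref{thm:quantitative runge} with a small-ball lower bound for the sub-Gaussian measure $\nu$: at each $x_{0}\in\overline{\Omega'}$, Runge approximation provides a \emph{deterministic} boundary datum for which the constraint is satisfied with a definite margin, and sub-Gaussianity of $\nu$, together with the injectivity of its covariance, gives a uniform lower bound on the $\nu$-mass of an $H^{1/2}(\bo)$-ball around this datum, so that with $N$ i.i.d.\ samples some tuple will land in such a ball. A union bound over a finite net in $\overline{\Omega'}$, combined with a $C^{0,1/2}$-control on $\zeta(u_{1}^{l},\dots,u_{n}^{l})$ to interpolate off the net, then yields the uniform bound~\eqref{eq:max}.

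First I fix $x_{0}\in\overline{\Omega'}$. Assumption~\ref{assu:runge-zeta} provides $u_{1}^{0},\dots,u_{n}^{0}\in C^{1,1/2}(\overline{\Omega})$ solving the frozen-coefficient equation $\div(a(x_{0})\nabla u_{i}^{0})=0$ in $\Omega$ with $|\zeta(u_{1}^{0},\dots,u_{n}^{0})(x_{0})|\ge1$. Theorem~\ref{thm:quantitative runge}, applied on a Lipschitz subdomain $D_{x_{0}}\Subset\Omega$ containing $B(x_{0},\rho)\cap\overline{\Omega'}$ for some $\rho>0$ uniform in $x_{0}$, then yields boundary values $\psi_{i}^{x_{0}}\in H^{1/2}(\bo)$ whose corresponding solutions $v_{i}^{x_{0}}$ to~\eqref{eq:ui dirichelt} approximate $u_{i}^{0}$ in $C^{1,1/2}(\overline{D_{x_{0}}})$ up to a prescribed error $\delta$, at the cost of $\|\psi_{i}^{x_{0}}\|_{H^{1/2}(\bo)}\le R(\delta)$ with $R(\delta)$ depending only on $\delta,\Omega,\Omega',\Lambda,\D$. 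Boundedness and locality of $\zeta$ then give $|\zeta(v_{1}^{x_{0}},\dots,v_{n}^{x_{0}})(x_{0})|\ge 1/2$ for $\delta$ small enough.

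Second, boundedness of the Dirichlet solution map $H^{1/2}(\bo)\to C^{1,1/2}(\overline{D_{x_{0}}})$ (from~\eqref{eq:0-eig} and classical elliptic regularity) combined with the multilinearity of $\zeta$ produces $r>0$, uniform in $x_{0}$, such that $\phi_{i}\in B_{H^{1/2}(\bo)}(\psi_{i}^{x_{0}},r)$ for every $i$ implies $|\zeta(u^{\phi_{1}},\dots,u^{\phi_{n}})(x_{0})|\ge 1/4$. Using the Karhunen--Lo\`eve expansion of a sub-Gaussian variable with injective covariance, one obtains a uniform lower bound $\nu(B_{H^{1/2}(\bo)}(\psi,r))\ge p>0$ for every $\psi$ with $\|\psi\|_{H^{1/2}(\bo)}\le R(\delta)$. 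Since the $\phi_{i}^{l}$'s are independent, the probability that a single tuple $(\phi_{1}^{l},\dots,\phi_{n}^{l})$ lies in $\prod_{i=1}^{n}B(\psi_{i}^{x_{0}},r)$ is at least $p^{n}$, and independence across $l=1,\dots,N$ bounds the pointwise failure probability by $(1-p^{n})^{N}$.

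Finally, I cover $\overline{\Omega'}$ by a $\rho'$-net of cardinality at most $C(\rho')^{-d}$ and extend the pointwise bound using the $C^{0,1/2}$-continuity of $\zeta(u_{1}^{l},\dots,u_{n}^{l})$; thanks to the $n$-multilinearity of $\zeta$ and elliptic regularity, its $C^{0,1/2}$-norm is controlled by a power of $\max_{i,l}\|\phi_{i}^{l}\|_{H^{1/2}(\bo)}$, and a sub-Gaussian moderate-deviations bound pins this maximum down by $C\sqrt{\log N}$ off an event of very small probability, after which choosing $\rho'$ of order $N^{-1}$ produces the $N^{d}$ prefactor in the union bound. The main obstacle is precisely the quantitative balancing of the scales $\delta$, $r$, $\rho'$ together with the sub-Gaussian tails of the boundary norms: propagating these non-trivially through the $n$-multilinear map $\zeta$ is what ultimately yields the stretched exponential $\exp(-\Cpp N^{1/n})$ of the statement rather than a clean $\exp(-cN)$, and accounts for the slightly enlarged sample budget $N\ge n^{1/(d-1)}$ compared with the deterministic bound of~\cite{2022-alberti-capdeboscq}.
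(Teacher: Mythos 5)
Your proposal takes a genuinely different route from the paper: you aim for a small-ball probability argument (each tuple of boundary data has a fixed positive probability of landing in a good product of $H^{1/2}$-balls), whereas the paper works with the second moment of $\zeta(u_{1},\dots,u_{n})(x)$, proving a variance lower bound (Lemma~\ref{lem:mean eta}) via a Cauchy--Schwarz argument in the Karhunen--Lo\`eve coordinates and then applying a Bernstein-type concentration inequality for $\tfrac{1}{n}$-subexponential random variables (Lemmas~\ref{lem:subexp}--\ref{lem:concentration application}), followed by the same kind of covering/H\"older-interpolation finish.

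The small-ball step is where your argument breaks down. You claim a uniform lower bound $\nu(B_{H^{1/2}(\bo)}(\psi,r))\ge p>0$ for all $\psi$ with $\|\psi\|_{H^{1/2}(\bo)}\le R(\delta)$. This is false for a square-integrable measure with trace-class covariance on an infinite-dimensional Hilbert space, even in the Gaussian case: by the Cameron--Martin theorem, $\nu(B(\psi,r))>0$ requires $\psi$ to lie in the Cameron--Martin space, and the lower bound degrades with the Cameron--Martin norm (which is exactly $\|\psi\|_{\sigma}$ in the paper's notation), not with $\|\psi\|_{H^{1/2}(\bo)}$. Controlling $\|\psi\|_{H^{1/2}(\bo)}$ alone does not help --- indeed, with only $H^{1/2}$-control the Runge approximation of \cite{ruland-salo-2019} would already suffice, and one of the main points of Theorem~\ref{thm:quantitative runge} is precisely to control the stronger $\|\cdot\|_{\sigma}$-norm of $u|_{\bo}$. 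Even after that correction, for \emph{general} sub-Gaussian (non-Gaussian) $\nu$ small-ball lower bounds are simply not available: sub-Gaussianity and injective covariance constrain tails and moments, not the local structure of the support (think of a measure carried by a sphere). The paper's variance/concentration approach sidesteps this entirely, since it never asks whether $\nu$ charges any particular small ball.

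A secondary issue is quantitative: your mechanism, if it worked, would give failure probability of order $(1-p^{n})^{N}\approx\exp(-p^{n}N)$ at each net point, i.e.\ a \emph{clean} exponential in $N$. The stretched-exponential rate $\exp(-\Cpp N^{1/n})$ in the statement is not an artifact of balancing scales through the multilinear map, as you suggest, but comes directly from the concentration inequality for $\tfrac{1}{n}$-subexponential variables: the empirical average $\tfrac{1}{N}\sum_{l}\zeta(u^{l})(x)^{2}$ has tails governed by $\exp(-cN^{1/n})$ because $\zeta(u)(x)^{2}$ is only $\tfrac{1}{n}$-subexponential, not sub-Gaussian. Your write-up does not produce that exponent from the argument it sets up.
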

Several comments on this result are in order.
\begin{itemize}
\item The positive constants $\Cp$, $\Cpp$ and $\Cppp$ depend only on
the priori data on the problem and are independent of the unknown
parameters $a$ and $q$ of the PDE.
\item The lower bound for the probability, $1-\Cp N^{d}\exp(-\Cpp N^{\frac{1}{n}})$,
converges exponentially to $1$ as $N$ grows, and so a small number
$N$ of measurements is sufficient to enforce the constraint with
overwhelming probability. Further, the parameters $d$ and $n$ appear
directly in the estimate: the larger they are, the worse, and this
is expected, because $d$ and $n$ can be seen as underlying dimensions
of the problem.
\item Even if the number $N$ of measurements is expected to be small, for
$N>1$ condition (\ref{eq:max}) guarantees that the constraint is
verified only locally in $\overline{\Omega'}$. More precisely, by
(\ref{eq:max}) we have the open cover
\begin{equation}
\overline{\Omega'}\subseteq\bigcup_{l=1}^{N}\Omega_{l},\qquad\Omega_{l}=\{x\in\Omega:|\zeta(u_{1}^{l},\dots,u_{n}^{l})(x)|>\Cppp/2\}.\label{eq:cover Omega;}
\end{equation}
In other words, the domain $\overline{\Omega'}$ can be covered by
$N$ subdomains, and in each of them the constraint (\ref{eq:constraint})
is satisfied for the measurement $l$. Therefore, the local reconstruction
procedures described in $\S$\ref{subsec:Examples} may be carried
out in each subdomain separately, by using the corresponding measurement
$l$.
\item It is worth observing that (\ref{eq:max}), or, equivalently, the
cover (\ref{eq:cover Omega;}), guarantees that (\ref{eq:constraint})
is satisfied with a quantitative lower bound $\Cppp$ depending only
on the a priori data. This lower bound is crucial from the numerical
point of view, because it makes all the inversion steps discussed
in $\S$\ref{subsec:Examples} well conditioned.
\item The fact that (\ref{eq:max}) is true only with overwhelming probability,
and not with probability $1$, is unavoidable with this approach,
where we aim for a quantitative lower bound with randomly-chosen (i.i.d.)
boundary values. Indeed, in general, the probability that all the
boundary conditions are not suitably chosen will be positive. It would
be interesting to investigate whether the random choice of the boundary
values could be modified to improve upon the number of solutions,
possibly in combination with the approach based on the Whitney embedding
theorem introduced in \cite{2022-alberti-capdeboscq}: we leave this
for future work.
\end{itemize}
The proof of this result is presented in Section~\ref{sec:Proof-of-Theorem contraint},
and is based on the two following steps:
\begin{enumerate}
\item Letting $\phi_{i}\sim\nu$ i.i.d.\ for $i=1,\dots,n$, and denoting
the corresponding solutions to (\ref{eq:ui dirichelt}) by $u_{i}$,
we prove a lower bound of the form
\[
\mathbb{E}\left(\zeta(u_{1},\dots,u_{n})(x)^{2}\right)\ge C>0,\qquad x\in\overline{\Omega'}.
\]
\item A concentration inequality and regularity estimates for the $u_{i}$s
allow us to move from an estimate in expectation to an estimate in
probability in the whole domain.
\end{enumerate}
For step (1), we shall need a quantitative version of the Runge approximation
property, which is the main result of the following section.

\section{Quantitative Runge approximation\label{sec:Quantitative-Runge-approximation}}

We consider the PDE introduced in $\S$\ref{subsec:The-elliptic-equation}.
The following result is known as the Runge approximation property
\cite{LAX-1956,MALGRANGE-1955-56}, which establishes that local solutions
to
\[
Lu=-\div(a\nabla u)+qu=0\quad\text{in }\Omega,
\]
 (recall (\ref{eq:PDE})) can always be approximated by global solutions.
\begin{prop}
\label{prop:runge}Let $D\Subset\Omega$ be a Lipschitz domain such
that $\Omega\setminus\overline{D}$ is connected. Take $h\in H^{1}(D)$
such that $Lh=0$ in $D$ and $\epsilon>0$. Then there exists $u\in H^{1}(\Omega)$
such that $Lu=0$ in $\Omega$ and
\[
\|h-u|_{D}\|_{L^{2}(D)}\le\epsilon.
\]
\end{prop}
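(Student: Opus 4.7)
The plan is to apply the classical Lax--Malgrange duality argument based on Hahn--Banach and weak unique continuation (the latter being available because $a \in W^{1,\infty}(\Omega)$). Consider
\[
V = \{u|_D : u \in H^1(\Omega),\ Lu = 0 \text{ in } \Omega\} \subseteq L^2(D),
\]
which is a linear subspace. The goal is to show that $V$ is dense in $\{h \in H^1(D) : Lh = 0\}$ for the $L^2(D)$-topology, and by Hahn--Banach this reduces to proving that every $f \in L^2(D)$ annihilating $V$ (via the $L^2(D)$-pairing) also annihilates every local solution.

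Given such $f$, extend it by zero to $\tilde f \in L^2(\Omega)$ and, using (\ref{eq:0-eig}), define $w \in H_0^1(\Omega)$ by $Lw = \tilde f$ in $\Omega$. Since $a^T = a$, the operator $L$ is formally self-adjoint, and a generalized Green identity gives, for every $u \in H^1(\Omega)$ with $Lu = 0$,
\[
\langle a\nabla w \cdot \nu,\, u|_{\bo}\rangle_{H^{-1/2}(\bo),\,H^{1/2}(\bo)} = \int_\Omega (a\nabla w \cdot \nabla u + qwu)\,dx - \int_\Omega \tilde f\, u\,dx = -\int_D f\, u|_D\,dx,
\]
where the first integral vanishes upon testing $Lu = 0$ against $w \in H_0^1(\Omega)$. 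Hypothesis (\ref{eq:0-eig}) further guarantees that the Dirichlet trace is surjective from $\{u \in H^1(\Omega) : Lu = 0\}$ onto $H^{1/2}(\bo)$, so if $f$ annihilates $V$, we deduce that the conormal derivative $a\nabla w \cdot \nu$ vanishes in $H^{-1/2}(\bo)$.

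Thus $w$ solves $Lw = 0$ in the open connected set $\Omega \setminus \overline D$ with both Dirichlet and conormal data vanishing on $\bo$. Since this set touches $\bo$, the weak unique continuation property for $L$ forces $w \equiv 0$ in $\Omega \setminus \overline D$. Hence $w \in H^1_0(D)$, and the fact that $w$, extended by zero, remains a distributional solution on $\Omega$ also yields $a\nabla w \cdot \nu_D = 0$ on $\partial D$. Therefore, for any local solution $h \in H^1(D)$ with $Lh = 0$, integration by parts in $D$ together with the vanishing of $\int_D (a\nabla h \cdot \nabla w + qhw)\,dx$ (by testing $Lh = 0$ against $w \in H^1_0(D)$) gives
\[
\int_D f\,h\,dx = \int_D (a\nabla w \cdot \nabla h + qwh)\,dx = 0,
\]
as required to close the Hahn--Banach step.

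The main obstacle, from a technical standpoint, is the careful low-regularity bookkeeping: since $w$ only lies in $H^1_0(\Omega)$, the conormal derivatives on $\bo$ and on $\partial D$ must be interpreted as elements of the respective $H^{-1/2}$ spaces via the standard Green-type pairings, and one must verify the transmission condition that forces $a\nabla w \cdot \nu_D = 0$ at $\partial D$. Coupled with the surjectivity of the Dirichlet trace on the solution set (a direct consequence of (\ref{eq:0-eig})) and the weak unique continuation principle for $L$ with Lipschitz leading coefficient, the argument then assembles routinely, following \cite{LAX-1956,MALGRANGE-1955-56,1962-browder}.
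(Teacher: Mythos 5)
Your argument is correct and is precisely the classical Lax--Malgrange duality proof that the paper invokes by citation: the paper states Proposition~\ref{prop:runge} as a known result with references to \cite{LAX-1956,MALGRANGE-1955-56} and does not supply its own proof. The chain you describe (Hahn--Banach reduction to annihilators, solving $Lw=\tilde f$ via (\ref{eq:0-eig}), reading off $a\nabla w\cdot\nu=0$ from surjectivity of the Dirichlet trace, extension-by-zero plus weak unique continuation across $\partial\Omega$ using connectedness of $\Omega\setminus\overline D$, and finally the transmission condition giving $a\nabla w\cdot\nu_D=0$ so that $\int_D fh=0$) is the standard route and is carried out correctly.
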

In the proof of Theorem~\ref{thm:constraints}, we will need a quantitative
version of this result, where a suitable norm of $u|_{\bo}$ is estimated.
This will be the content of the main result of this section.

We need to introduce the class of Lipschitz subdomains $D$ we are
going to consider. Following \cite[Chapter VI,  Section 3]{stein-1970},
we say that an open set $D\subseteq\R^{d}$ is $\Lambda$-Lipschitz,
for a Lipschitz bound $\Lambda\ge1$, if there exists a family of
open sets $\{U_{i}\}_{i}$ such that:
\begin{enumerate}
\item If $x\in\partial D$, then $B(x,\Lambda^{-1})\subseteq U_{i}$ for
some $i$, where $B(x,r)$ denotes the open ball in $\R^{d}$ of centre
$x$ and radius $r$, namely, $B(x,r)=\{y\in\R^{d}:|y-x|<r\}$;
\item The intersection of more than $\left\lfloor \Lambda\right\rfloor $
of the sets $U_{i}$ is always empty, where $\left\lfloor \Lambda\right\rfloor $
is the integer part of $\Lambda$;
\item For every $i$, up to a rotation of the axes, $D\cap U_{i}=D_{i}\cap U_{i}$,
where $D_{i}$ is the set of points lying above a hypersurface with
Lipschitz graph, with Lipschitz constant smaller than or equal to
$\Lambda$.
\end{enumerate}
We are ready to state the main result on quantitative Runge approximation.
\begin{thm}
\label{thm:quantitative runge}Let $\Omega\subseteq\R^{d}$, $d\ge2$,
be a bounded, Lipschitz and connected domain, $\Lambda\ge1$, $\epsilon>0$
and $f\colon H^{1/2}(\bo)\to[0,+\infty]$ be positively homogeneous
and such that $f^{-1}([0,+\infty))$ is dense in $H^{1/2}(\bo)$.
Then, there exists $C>0$ depending only on $\Omega$, $\Lambda$,
$\epsilon$ and $f$ such that the following is true.

Let $a\in W^{1,\infty}\left(\Omega;\mathbb{R}^{d\times d}\right)$
and $q\in L^{\infty}(\Omega)$ satisfy $a^{T}=a$ and (\ref{eq:coercive}),
(\ref{eq:a-Lambda}), (\ref{eq:q-Lambda}) and (\ref{eq:0-eig}).
Let $D\Subset\Omega$ be a convex domain such that
\begin{equation}
|D|\ge\Lambda^{-1},\qquad d(\overline{D},\bo)\ge\Lambda^{-1},\qquad\text{\ensuremath{D} is \ensuremath{\Lambda}-Lipschitz.}\label{eq:hypotheses D}
\end{equation}
For every $h\in H^{1}(D)$ such that $Lh=0$ in $D$ there exists
$u\in H^{1}(\Omega)$ such that $Lu=0$ in $\Omega$ and
\begin{equation}
\|h-u|_{D}\|_{L^{2}(D)}\le\epsilon\|h\|_{H^{1}(D)},\qquad f(u|_{\bo})\le C\|h\|_{H^{1}(D)}.\label{eq:thesis}
\end{equation}
\end{thm}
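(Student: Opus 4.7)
The plan is to invoke the quantitative Runge approximation of R\"uland and Salo \cite{ruland-salo-2019}, which supplies a global solution together with an $H^{1/2}(\bo)$-bound on its boundary trace, and then trade that $H^{1/2}$-control for the $f$-control by a finite-cover argument. The essential analytic ingredients will be the compact embedding $H^{1/2}(\bo)\Subset L^2(\bo)$, the density of $V:=f^{-1}([0,+\infty))$ in $H^{1/2}(\bo)$, and a uniform $L^2\to L^2$ bound for the Poisson operator. By linearity of $L$ and positive homogeneity of $f$, the proof can be reduced to the normalized case $\|h\|_{H^1(D)}=1$.

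In the first step, R\"uland-Salo applied under the hypotheses (\ref{eq:hypotheses D}) will yield $\tilde u\in H^1(\Omega)$ with $L\tilde u=0$ in $\Omega$, $\|h-\tilde u|_D\|_{L^2(D)}\le\epsilon/2$, and $\|\tilde u|_{\bo}\|_{H^{1/2}(\bo)}\le C_{RS}$ for some $C_{RS}=C_{RS}(\Omega,\Lambda,\epsilon)$. In the second step, the very weak Poisson operator $T\colon L^2(\bo)\to L^2(D)$, $\phi\mapsto u|_D$ with $Lu=0$ in $\Omega$ and $u|_{\bo}=\phi$, is shown to be uniformly bounded: a duality argument using the adjoint problem $Lv=w\one_D$ in $\Omega$, $v|_{\bo}=0$, combined with Jerison--Kenig $H^{3/2}$-regularity of $v$ on the Lipschitz domain $\Omega$, delivers $\|T\|\le C_A=C_A(\Omega,\Lambda)$, uniformly in $(a,q,D)$.

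For the third step, since $H^{1/2}(\bo)\Subset L^2(\bo)$, the ball $B_{RS}:=\{\psi\in H^{1/2}(\bo):\|\psi\|_{H^{1/2}(\bo)}\le C_{RS}\}$ is totally bounded in $L^2(\bo)$, so it can be covered by finitely many $L^2$-balls of radius $\eta:=\epsilon/(4C_A)$ centred at $\psi_1,\dots,\psi_M\in B_{RS}$. Density of $V$ in $H^{1/2}(\bo)$ (hence in $L^2(\bo)$) then allows the selection of $\phi_j\in V$ with $\|\phi_j-\psi_j\|_{L^2(\bo)}\le\eta$, and the constant $C:=\max_{j}f(\phi_j)$ is finite. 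Given a target $\tilde u|_{\bo}\in B_{RS}$, one selects the nearest centre $\psi_j$ and lets $u$ solve $Lu=0$ in $\Omega$ with $u|_{\bo}=\phi_j$; the triangle inequality combined with the uniform bound for $T$ then delivers
\[
\|h-u|_D\|_{L^2(D)}\le\tfrac{\epsilon}{2}+C_A\|\tilde u|_{\bo}-\psi_j\|_{L^2(\bo)}+C_A\|\psi_j-\phi_j\|_{L^2(\bo)}\le\epsilon,
\]
together with $f(u|_{\bo})=f(\phi_j)\le C$. Undoing the normalization via the positive homogeneity of $f$ yields (\ref{eq:thesis}) for general $h$.

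The main obstacle is expected to be the second step: the uniform $L^2\to L^2$ estimate for the Poisson operator $T$ requires quantitative boundary regularity for the adjoint problem on a Lipschitz domain, uniformly in the admissible $(a,q,D)$, and is where the distance assumption $d(\overline D,\bo)\ge\Lambda^{-1}$ and the a priori bounds (\ref{eq:coercive})--(\ref{eq:0-eig}) play their decisive role. A secondary point is to confirm that the geometric hypotheses (\ref{eq:hypotheses D}) match what R\"uland--Salo requires, so that $C_{RS}$ depends only on $\Omega,\Lambda,\epsilon$.
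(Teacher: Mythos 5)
The central gap is in your first step, and you flag it yourself as a ``secondary point'' when it is in fact fatal: the R\"uland--Salo quantitative Runge estimate does \emph{not} yield a constant $C_{RS}$ depending only on $\Omega,\Lambda,\epsilon$. As the paper's discussion after Theorem~\ref{thm:quantitative runge} explicitly states, in \cite{ruland-salo-2019} the constant depends on the \emph{specific} domain $D$ (and, in fact, on the specific coefficients $a,q$), not merely on the Lipschitz bound $\Lambda$ and the a priori bounds (\ref{eq:coercive})--(\ref{eq:0-eig}). Your cover argument, however clever, inherits this: the ball $B_{RS}$ has radius $C_{RS}$, so the cardinality $M$ of the cover, the centres $\psi_j$, the choices $\phi_j\in V$, and finally $C=\max_j f(\phi_j)$ all depend on $C_{RS}$, hence on $(a,q,D)$. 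You would thus produce a constant that is not uniform over the admissible class, which is exactly what the theorem claims. This is why the paper does \emph{not} argue constructively via R\"uland--Salo. Instead it runs a compactness-by-contradiction argument over the admissible class: assuming no uniform $C$ exists, one extracts sequences $(a_n,q_n,D_n,h_n)$, uses Banach--Alaoglu for $q_n\rightharpoonup^* q$, Ascoli--Arzel\`a for $a_n\to a$, Blaschke selection for $D_n\to D$, shows that $0$ is not a Dirichlet eigenvalue of the limiting operator $L$, applies the \emph{qualitative} Runge approximation (Proposition~\ref{prop:runge}) and density of $f^{-1}([0,+\infty))$ to the limiting problem to produce a $u$ with finite $f(u|_{\bo})$, and perturbs back to $(a_{\bar n},q_{\bar n},D_{\bar n})$ to contradict the assumed failure. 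The uniformity that your approach must establish by hand is obtained for free from the compactness of the admissible class.

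Two secondary remarks. First, your Step~2 relies on a uniform $L^2(\bo)\to L^2(D)$ bound for the very-weak Poisson operator via Jerison--Kenig $H^{3/2}$ regularity for the adjoint; this is plausible (and the interior distance condition $d(\overline{D},\bo)\ge\Lambda^{-1}$ would help) but is a heavier and more delicate tool than anything the paper's compactness route requires, and its uniformity in $(a,q,D)$ is itself nontrivial. Second, your third step---trading $H^{1/2}$-control for $f$-control by a finite $L^2$-cover and density of $V$---is a genuinely nice idea not present in the paper, but it cannot compensate for the non-uniformity introduced upstream.
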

Several comments on this result are in order:
\begin{itemize}
\item As it will be clear from the proof, the assumption on the convexity
of $D$ may be removed, provided that $\Omega\setminus\overline{D}$
is connected. However, in this case, the constant $C$ will depend
also on $D$, and not only on its Lipschitz bound $\Lambda$.
\item The function $f$ may be chosen as a stronger (semi-)norm on $\partial\Omega$,
such as 
\begin{equation}
f(\phi)=\|\phi\|_{H^{s}(\bo)}\label{eq:Hs}
\end{equation}
 for any $s\ge\frac{1}{2}$, because $f(z\phi)=|z|f(\phi)$ for all
$z\in\R$. Further, the density of $f^{-1}([0,+\infty))=H^{s}(\bo)$
in $H^{1/2}(\bo)$ follows from the density of smooth functions in
$H^{1/2}(\bo)$.
\item For the proof of Theorem~\ref{thm:constraints}, we will apply this
result with
\begin{equation}
f(\phi)=\left(\sum_{k\in\N}\frac{|a_{k}|^{2}}{\sigma_{k}^{2}}\right)^{\frac{1}{2}}\in[0,+\infty],\qquad\phi=\sum_{k\in\N}a_{k}e_{k}\in H^{\frac{1}{2}}(\bo),\label{eq:spoiler}
\end{equation}
where we use the notation of (\ref{eq:phi Gaussian}). It is worth
observing that, if $\Omega=B(0,1)\subseteq\R^{2}$ and the basis $\{e_{k}\}$
is the Fourier basis on $\partial B(0,1)$, this map $f$, for a suitable
choice of weights $\sigma_{k}$, corresponds to the one in (\ref{eq:Hs}).
Thus, in some sense, $f(\phi)$ may be seen as a generalised Sobolev
norm on $\bo$.
\item If we compare this result with the quantitative version of the Runge
approximation property derived in \cite{ruland-salo-2019}, we observe
that in \cite{ruland-salo-2019} the second bound in (\ref{eq:thesis})
is replaced by
\[
\|u|_{\bo}\|_{H^{1/2}(\bo)}\le Ce^{C\epsilon^{-\mu}}\|h\|_{L^{2}(D)},
\]
where $C$ and $\mu$ are positive constants that are independent
of $\epsilon$. Thus, the estimate in Theorem~\ref{thm:quantitative runge}
allows for controlling more general norms of $u|_{\bo}$ (and this
will be needed in the proof of Theorem~\ref{thm:constraints}), but
is weaker because no explicit dependence on $\epsilon$ is given.
This is due to the fact that the proof of Theorem~\ref{thm:quantitative runge}
is based on an argument by contradiction and is not constructive,
in contrast to the argument of \cite{ruland-salo-2019}, which is
based on quantitative unique continuation estimates. Another difference
lies in the fact that in the results of \cite{ruland-salo-2019},
the constant $C$ depends on $D$, while here it depends only on its
Lipschitz bound $\Lambda$.
\end{itemize}

\section{\label{sec:Proof-of-Theorem contraint}Proof of Theorem~\ref{thm:constraints}}

Let us now express the random variable $\phi$ more explicitly. By
the Karhunen--Loève theorem \cite{2019-steinwart}, there exists
an orthonormal basis $\{e_{k}:k\in\N\}$ of $H^{\frac{1}{2}}(\bo)$
such that
\begin{equation}
\varphi=\sum_{k\in\N}a^{k}e_{k},\label{eq:phi_i}
\end{equation}
where $\{a^{k}\}_{k}$ are real random variables, have zero-mean,
are pairwise uncorrelated and have variance $\sigma_{k}^{2}$. More
precisely, we have
\begin{equation}
\E(a^{k})=0,\qquad\E(a^{k}a^{k'})=\delta_{kk'}\sigma_{k}^{2},\qquad k,k'\in\N.\label{eq:uncorrelated}
\end{equation}
Note that $\sigma_{k}>0$ for every $k$, because the covariance of
$\phi$ is injective (Assumption~\ref{assu:phi}). It is worth observing
that the example of the Gaussian random variable presented in (\ref{eq:phi Gaussian})
is a particular case of this construction, with $a^{k}$ independent
Gaussian random variables.

As in the statement of Theorem~\ref{thm:constraints}, let $\phi_{i}\sim\nu$
be sampled i.i.d.\ in $H^{\frac{1}{2}}(\bo)$ for $i=1,\dots,n$,
and let $u_{i}$ be the corresponding solution to (\ref{eq:ui dirichelt}):
\begin{equation}
\tag{{\ref{eq:ui dirichelt}}}\left\{ \begin{array}{ll}
Lu_{i}:=-\div(a\nabla u_{i})+qu_{i}=0 & \text{in}\;\Omega,\\
u_{i}=\varphi_{i} & \text{on}\;\bo.
\end{array}\right.\label{eq:ui dirichelt-1}
\end{equation}
Note that $u_{i}$ is itself a random variable in $H^{1}(\Omega)$,
and consequently $\zeta(u_{1},\dots,u_{n})$ is a random variable
in $C^{0,1/2}(\overline{\Omega'})$. The mean of $\zeta(u_{1},\dots,u_{n})$
is zero, because $\phi_{i}$ has mean zero, $L$ is linear and $\zeta$
is multilinear. Its variance is computed in the following lemma.
\begin{lem}
\label{lem:expectaction}We have
\[
\E\left(\zeta(u_{1},\dots,u_{n})^{2}\right)=\sum_{k_{1},\dots,k_{n}\in\N}\sigma_{k_{1}}^{2}\cdot\cdots\cdot\sigma_{k_{n}}^{2}\zeta(z_{k_{1}},\dots,z_{k_{n}})^{2},
\]
where $z_{k}$ is the unique solution to
\[
\left\{ \begin{array}{ll}
-\div(a\nabla z_{k})+qz_{k}=0 & \text{in}\;\Omega,\\
z_{k}=e_{k} & \text{on}\;\bo.
\end{array}\right.
\]
\end{lem}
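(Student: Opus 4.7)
The plan is to express each $u_i$ in the Karhunen-Loève basis via linearity of the Dirichlet problem, expand $\zeta$ by multilinearity, and then collapse the resulting double series using the uncorrelation relations (\ref{eq:uncorrelated}) together with the independence of the $\phi_i$ across $i$.

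First, since the Dirichlet problem for $L$ is well-posed by (\ref{eq:0-eig}), the solution map $\phi\mapsto u$ is linear and bounded from $H^{\frac12}(\bo)$ to $H^{1}(\Omega)$. Applying this to the expansion $\phi_i=\sum_{k\in\N}a_i^k e_k$ yields $u_i=\sum_{k\in\N}a_i^k z_k$, almost surely, with convergence in $H^{1}(\Omega)$. Since $\Omega'\Subset\Omega$, classical interior Schauder estimates combined with (\ref{eq:a-Lambda}) upgrade this convergence to $C^{1,1/2}(\overline{\Omega'})$. Using the multilinearity and boundedness of $\zeta\colon C^{1,1/2}(\overline{\Omega'})^n\to C^{0,1/2}(\overline{\Omega'})$, I would then expand
\[
\zeta(u_1,\dots,u_n)=\sum_{k_1,\dots,k_n\in\N}a_1^{k_1}\cdots a_n^{k_n}\,\zeta(z_{k_1},\dots,z_{k_n}),
\]
with convergence in $C^{0,1/2}(\overline{\Omega'})$.

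Next, I would square the expansion and take expectation. Every cross term carries the factor
\[
\E\bigl[a_1^{k_1}a_1^{k_1'}\cdots a_n^{k_n}a_n^{k_n'}\bigr]=\prod_{i=1}^n \E\bigl[a_i^{k_i}a_i^{k_i'}\bigr]=\prod_{i=1}^n \delta_{k_ik_i'}\sigma_{k_i}^2,
\]
where the first equality uses the i.i.d.\ assumption on the $\phi_i$ (and hence the independence of the sequences $\{a_i^k\}_{k}$ as $i$ varies, each being a measurable function of the corresponding $\phi_i$), and the second equality is (\ref{eq:uncorrelated}). Only the diagonal terms $k_i=k_i'$ survive, producing exactly the stated formula.

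The main technical obstacle is rigorously justifying the interchange of expectation with the infinite (multi-indexed) sum and with the squaring operation. The natural route is to truncate the Karhunen-Loève expansion at level $K$ (where all manipulations reduce to finite sums and are elementary), verify the identity for the truncation, and pass to the limit $K\to\infty$. The required uniform integrability of the partial sums follows from the trace-class property $\sum_k \sigma_k^2 < +\infty$ of $\Sigma$, combined with the boundedness of the solution operator and of $\zeta$: these together give an $\Omega'$-uniform control of the $L^2$-norm of the partial sums with respect to the probability measure, allowing an application of dominated convergence and Fubini's theorem.
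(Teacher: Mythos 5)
Your proof follows essentially the same route as the paper's: expand $\phi_i$ in the Karhunen--Lo\`eve basis, use linearity of the solution map to get $u_i=\sum_k a_i^k z_k$, expand $\zeta$ by multilinearity, square, take expectation, and then factor using independence across $i$ and the uncorrelation relation (\ref{eq:uncorrelated}). Your additional remarks on justifying the interchange of expectation and infinite sums via truncation and dominated/monotone convergence fill in a technical step that the paper leaves implicit; just note that the Karhunen--Lo\`eve expansion is guaranteed to converge in $L^2$ over the probability space rather than almost surely (only uncorrelatedness, not independence, of the $a^k$ is assumed), so the passage to the limit is cleaner if phrased in terms of $L^2$ convergence of $u_i^{(K)}\to u_i$ in $C^{1,1/2}(\overline{\Omega'})$-valued random variables.
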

\begin{proof}
In view of (\ref{eq:phi_i}), we can write $\varphi_{i}=\sum_{k\in\N}a_{i}^{k}e_{k}$.
Thus, using the fact that the PDE (\ref{eq:ui dirichelt}) is linear,
we have $u_{i}=\sum_{k}a_{i}^{k}z_{k}$. Thus, since $\zeta$ is multilinear
and bounded, we have
\[
\begin{split}\zeta(u_{1},\dots,u_{n}) & =\zeta\left(\sum_{k_{1}\in\N}a_{1}^{k_{1}}z_{k_{1}},\dots,\sum_{k_{n}\in\N}a_{n}^{k_{n}}z_{k_{n}}\right)\\
 & =\sum_{k_{1},\dots,k_{n}\in\N}a_{1}^{k_{1}}\cdots a_{n}^{k_{n}}\zeta(z_{k_{1}},\dots,z_{k_{n}}).
\end{split}
\]
We readily derive
\[
\begin{split} & \zeta(u_{1},\dots,u_{n})^{2}\\
 & =\left(\sum_{k_{1},\dots,k_{n}\in\N}a_{1}^{k_{1}}\cdots a_{n}^{k_{n}}\zeta(z_{k_{1}},\dots,z_{k_{n}})\right)\left(\sum_{k'_{1},\dots,k'_{n}\in\N}a_{1}^{k'_{1}}\cdots a_{n}^{k'_{n}}\zeta(z_{k'_{1}},\dots,z_{k'_{n}})\right)\\
 & =\sum_{k_{1},k_{1}',\dots,k_{n},k_{n}'}\left(a_{1}^{k_{1}}a_{1}^{k'_{1}}\right)\cdots\left(a_{n}^{k_{n}}a_{n}^{k'_{n}}\right)\zeta(z_{k_{1}},\dots,z_{k_{n}})\zeta(z_{k'_{1}},\dots,z_{k'_{n}}).
\end{split}
\]
Note that $z_{k}$ is a deterministic quantity. Taking the expectation,
by using that the random variables $a_{i}^{k}$ and $a_{j}^{l}$ are
independent if $i\neq j$, we obtain
\begin{multline*}
\E\left(\zeta(u_{1},\dots,u_{n})^{2}\right)\\
=\sum_{k_{1},k_{1}',\dots,k_{n},k_{n}'}\E\left(a_{1}^{k_{1}}a_{1}^{k'_{1}}\right)\cdots\E\left(a_{n}^{k_{n}}a_{n}^{k'_{n}}\right)\zeta(z_{k_{1}},\dots,z_{k_{n}})\zeta(z_{k'_{1}},\dots,z_{k'_{n}}).
\end{multline*}
By (\ref{eq:uncorrelated}), we have
\[
\E\left(\zeta(u_{1},\dots,u_{n})^{2}\right)=\sum_{k_{1},\dots,k_{n}}\sigma_{k_{1}}^{2}\cdots\sigma_{k_{n}}^{2}\zeta(z_{k_{1}},\dots,z_{k_{n}})^{2},
\]
as desired.
\end{proof}
The next lemma shows that it is possible to approximate locally a
solution to the PDE $Lu=0$ by a solution to the PDE with constant
coefficients and without the zeroth order term.
\begin{lem}[{\cite[Proposition~7.10]{Alberti2018}}]
\label{lem:prop710}Take $\delta,\D>0$ and $u^{0}\in C^{1,1/2}(\overline{\Omega})$
such that $\|u^{0}\|_{C^{1,1/2}(\overline{\Omega})}\le\D$. There
exists $r>0$ depending only on $\Omega$, $\Omega'$,  $\Lambda$,
$\D$ and $\delta$ such that for every $x_{0}\in\overline{\Omega'}$
such that $\div(a(x_{0})\nabla u^{0})=0$ in $B(x_{0},r)$ there exists
$u\in H^{1}(B(x_{0},r))$ such that $Lu=0$ in $B(x_{0},r)$ and
\[
\|u-u^{0}\|_{C^{1,1/2}(\overline{B(x_{0},r)})}\le\delta.
\]
\end{lem}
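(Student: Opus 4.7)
The plan is a perturbation argument. Writing $u = u^0 + v$, I look for $v \in H^1_0(B(x_0,r))$ solving
\begin{equation*}
Lv = -Lu^0 \quad\text{in } B(x_0,r), \qquad v = 0 \quad\text{on } \partial B(x_0,r).
\end{equation*}
Provided $r \le d(\overline{\Omega'},\partial\Omega)$ one has $B(x_0,r)\subset\Omega$, and taking $r$ further small in terms of $\Lambda$ makes $L$ coercive on $H^1_0(B(x_0,r))$ via Poincar\'e's inequality, so the problem is uniquely solvable. The hypothesis $\div(a(x_0)\nabla u^0)=0$ in $B(x_0,r)$ lets me rewrite the source as
\begin{equation*}
-Lu^0 = \div\bigl((a-a(x_0))\nabla u^0\bigr) - qu^0 =: \div F + G,
\end{equation*}
with $\|F\|_{C^{0,1/2}(\overline{B(x_0,r)})} = O(r^{1/2})$ (using that $a$ is Lipschitz and $\nabla u^0\in C^{0,1/2}$ with norm $\le\D$) and $\|G\|_{L^\infty}\le\Lambda\D$; the term $G$ is not pointwise small, but lives on a ball of small volume.

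The core task is to show $\|v\|_{C^{1,1/2}(\overline{B(x_0,r)})}\to 0$ as $r\to 0$, uniformly in $x_0\in\overline{\Omega'}$ and in $u^0$ with the prescribed norm bound. I would rescale to the unit ball via $\tilde v(y):=v(x_0+ry)$, which satisfies
\begin{equation*}
-\div(\tilde a\nabla\tilde v) + r^2\tilde q\,\tilde v = r\,\div\tilde F + r^2\tilde G \quad\text{in } B(0,1), \qquad \tilde v|_{\partial B(0,1)}=0,
\end{equation*}
where $\tilde a,\tilde q$ are bounded uniformly in $r$, $\|r\tilde F\|_{C^{0,1/2}(\overline{B(0,1)})}=O(r^2)$ and $\|r^2\tilde G\|_{L^\infty}=O(r^2)$. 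Splitting $\tilde v=\tilde v_1+\tilde v_2$ to address the two source parts, I would apply boundary $C^{1,1/2}$ Schauder estimates to $\tilde v_1$ (valid because $\tilde a\in C^{0,1}$, $\partial B(0,1)$ is smooth, and the source sits in divergence form with $C^{0,1/2}$ potential) and $W^{2,p}$ Calder\'on--Zygmund estimates combined with the embedding $W^{2,p}\hookrightarrow C^{1,1/2}$ (for $p>2d$) to $\tilde v_2$; the constants depend only on $\Lambda,\D$ and the outcome is $\|\tilde v\|_{C^{1,1/2}(\overline{B(0,1)})}=O(r^2)$.

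Un-rescaling with $\|v\|_{C^0}=\|\tilde v\|_{C^0}$, $\|\nabla v\|_{C^0}=r^{-1}\|\nabla\tilde v\|_{C^0}$ and $[\nabla v]_{C^{0,1/2}}=r^{-3/2}[\nabla\tilde v]_{C^{0,1/2}}$ yields $\|v\|_{C^{1,1/2}(\overline{B(x_0,r)})}=O(r^{1/2})$, which can be made smaller than $\delta$ by choosing $r$ small enough depending only on $\Omega,\Omega',\Lambda,\D,\delta$. The main obstacle is precisely this scaling bookkeeping: the H\"older semi-norm of $\nabla v$ blows up like $r^{-3/2}$ upon un-rescaling, so the rescaled $C^{1,1/2}$ bound must decay strictly faster than $r^{3/2}$. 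This marginal improvement is secured by the $r^{1/2}$ factor in $\|F\|_{C^{0,1/2}}$ coming from the Lipschitz regularity of $a$ and, for the zeroth-order term, by the $r^2$ produced by rescaling; a minor technical step is absorbing the lower-order term $r^2\tilde q\,\tilde v$ as a standard small perturbation in the elliptic estimates.
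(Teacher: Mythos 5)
The paper does not prove this lemma; it is cited verbatim from \cite[Proposition~7.10]{Alberti2018}, so there is no in-paper argument for me to compare against. That said, your coefficient-freezing plus rescaling argument is correct in substance and almost certainly mirrors the cited proof. The key computation checks out: $\|F\|_{L^\infty(B(x_0,r))}=O(r)$ and $[F]_{C^{0,1/2}(B(x_0,r))}=O(r^{1/2})$ give $\|\tilde F\|_{C^{0,1/2}(\overline{B(0,1)})}=O(r)$ after rescaling, hence $\|r\tilde F\|_{C^{0,1/2}}=O(r^2)$, and the $r^{-3/2}$ loss in un-rescaling the $C^{1,1/2}$ norm leaves $O(r^{1/2})$.

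Two technical points deserve a word, though neither is a gap. First, $q$ is only $L^\infty$, so you cannot feed $r^2\tilde q$ into a divergence-form Schauder theorem as a zeroth-order coefficient (those theorems want $C^{0,\alpha}$ coefficients); the clean fix is to move $r^2\tilde q\tilde v$ to the right-hand side as part of the $L^\infty$ source and control it via the a priori bound $\|\tilde v\|_{L^\infty}\le C(\|r\tilde F\|_{L^p}+\|r^2\tilde G\|_{L^p})$ from De~Giorgi--Nash or the $H^1_0$ estimate plus iteration. Second, the $W^{2,p}$ Calder\'on--Zygmund step needs the equation in non-divergence form, i.e.\ $-\tilde a:D^2\tilde v_2-(\operatorname{div}\tilde a)\cdot\nabla\tilde v_2=\dots$, which is legitimate here precisely because $a\in W^{1,\infty}$ and hence $\operatorname{div}\tilde a\in L^\infty$ with norm bounded by $\Lambda$; this is worth saying explicitly since $W^{1,\infty}$ regularity of $a$ is the a priori assumption that makes the whole scheme close. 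With these clarifications the argument is sound and the constants depend only on $\Omega,\Omega',\Lambda,E,\delta$ as required.
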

As anticipated in (\ref{eq:spoiler}), we now define the following
stronger ``norm'' on $H^{\frac{1}{2}}(\bo)$: 
\begin{equation}
\|\phi\|_{\sigma}^{2}=\sum_{k\in\N}\frac{|a_{k}|^{2}}{\sigma_{k}^{2}}\in[0,+\infty],\qquad\phi=\sum_{k\in\N}a_{k}e_{k}\in H^{\frac{1}{2}}(\bo).\label{eq:norm sigma}
\end{equation}
Note that this is well defined because $\sigma_{k}>0$ for every $k$
and that it is stronger than $\|\cdot\|_{H^{\frac{1}{2}}(\bo)}$ because
$\sigma_{k}\to0$ (since the covariance of the random variable $\phi$
is trace class). The following lemma is a consequence of Assumption~\ref{assu:runge-zeta}
and of Theorem~\ref{thm:quantitative runge}.
\begin{lem}
\label{lem:tilde phi}Take $x_{0}\in\overline{\Omega'}$. There exist
$C>0$ depending only on $\Omega$, $\Omega'$, $\Lambda$, $\D$,
$\zeta$ and $\nu$ and $\tilde{\phi}_{1},\dots,\tilde{\phi}_{n}\in H^{1/2}(\bo)$
such that
\[
\|\tilde{\phi}_{i}\|_{\sigma}\le C,\qquad i=1,\dots,n
\]
and
\[
|\zeta(\tilde{u}_{1},\dots,\tilde{u}_{n})(x_{0})|\ge1/2,
\]
where
\[
\left\{ \begin{array}{ll}
-\div(a\nabla\tilde{u}_{i})+q\tilde{u}_{i}=0 & \text{in}\;\Omega,\\
\tilde{u}_{i}=\tilde{\varphi}_{i} & \text{on}\;\bo.
\end{array}\right.
\]
\end{lem}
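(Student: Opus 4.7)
The plan is to chain three approximation steps to move from the constant-coefficient model solutions of Assumption~\ref{assu:runge-zeta} to actual $L$-solutions with controlled boundary data. First, Assumption~\ref{assu:runge-zeta} supplies $u_{i}^{0}\in C^{1,1/2}(\overline{\Omega})$ with $\div(a(x_{0})\nabla u_{i}^{0})=0$, $\|u_{i}^{0}\|_{C^{1,1/2}(\overline{\Omega})}\le E$ and $|\zeta(u_{1}^{0},\dots,u_{n}^{0})(x_{0})|\ge 1$. Second, Lemma~\ref{lem:prop710} turns each $u_{i}^{0}$ into a local $L$-solution $u_{i}^{\mathrm{loc}}\in H^{1}(B(x_{0},r))$ with $\|u_{i}^{\mathrm{loc}}-u_{i}^{0}\|_{C^{1,1/2}(\overline{B(x_{0},r)})}\le\delta$, for a small parameter $\delta$ to be fixed later and $r$ uniform in $x_{0}\in\overline{\Omega'}$. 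Third, I would apply Theorem~\ref{thm:quantitative runge} to globalise each $u_{i}^{\mathrm{loc}}$ while simultaneously controlling the boundary trace in the $\|\cdot\|_{\sigma}$ norm.

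For this third step I first verify that $f=\|\cdot\|_{\sigma}$ fulfils the hypotheses of Theorem~\ref{thm:quantitative runge}: positive homogeneity is immediate from (\ref{eq:norm sigma}), and the finite linear span of $\{e_{k}\}$, which lies in $f^{-1}([0,+\infty))$, is dense in $H^{1/2}(\bo)$. I then pick $D=B(x_{0},r_{D})$ with $r_{D}$ small enough that $\overline{D}\subset B(x_{0},r)\cap\Omega$ uniformly in $x_{0}\in\overline{\Omega'}$; this is possible because $d(\overline{\Omega'},\bo)>0$, and balls are convex and satisfy (\ref{eq:hypotheses D}) with a universal Lipschitz parameter. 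Applying Theorem~\ref{thm:quantitative runge} to $h=u_{i}^{\mathrm{loc}}|_{D}$, with accuracy $\epsilon>0$, produces $\tilde u_{i}\in H^{1}(\Omega)$ solving $L\tilde u_{i}=0$ with $\|\tilde u_{i}-u_{i}^{\mathrm{loc}}\|_{L^{2}(D)}\le\epsilon\|u_{i}^{\mathrm{loc}}\|_{H^{1}(D)}$ and $\|\tilde u_{i}|_{\bo}\|_{\sigma}\le C\|u_{i}^{\mathrm{loc}}\|_{H^{1}(D)}$. Since $\|u_{i}^{\mathrm{loc}}\|_{H^{1}(D)}$ is uniformly bounded by a constant depending only on $E$, $\Lambda$, $|D|$, this already yields the desired bound $\|\tilde\phi_{i}\|_{\sigma}\le C$.

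To extract the pointwise constraint $|\zeta(\tilde u_{1},\dots,\tilde u_{n})(x_{0})|\ge 1/2$, I would first upgrade the $L^{2}(D)$ bound via interior Schauder estimates applied to the $L$-harmonic difference $\tilde u_{i}-u_{i}^{\mathrm{loc}}$, obtaining $\|\tilde u_{i}-u_{i}^{\mathrm{loc}}\|_{C^{1,1/2}(\overline{B(x_{0},r_{D}/2)})}\lesssim\epsilon$. Combining with step two gives $\|\tilde u_{i}-u_{i}^{0}\|_{C^{1,1/2}(\overline{B(x_{0},r_{D}/2)})}\le\delta+C\epsilon$. Using the locality of $\zeta$, I then replace $\tilde u_{i}$ by $\hat u_{i}:=\chi\tilde u_{i}+(1-\chi)u_{i}^{0}$ with a cutoff $\chi$ supported in a small enough neighbourhood of $x_{0}$, so that $\zeta(\hat u_{1},\dots,\hat u_{n})(x_{0})=\zeta(\tilde u_{1},\dots,\tilde u_{n})(x_{0})$ while $\|\hat u_{i}-u_{i}^{0}\|_{C^{1,1/2}(\overline{\Omega'})}=O(\delta+\epsilon)$. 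Multilinearity and boundedness of $\zeta$ then imply $|\zeta(\tilde u_{1},\dots,\tilde u_{n})(x_{0})-\zeta(u_{1}^{0},\dots,u_{n}^{0})(x_{0})|\le 1/2$ as soon as $\delta,\epsilon$ are chosen small, depending only on $E$ and the operator norm of $\zeta$.

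The main obstacle I expect is precisely this last bridge from the $L^{2}(D)$ and $\|\cdot\|_{\sigma}$ bounds supplied by Theorem~\ref{thm:quantitative runge} to a pointwise bound on $\zeta$, which typically involves derivatives of the $\tilde u_{i}$ at $x_{0}$: both the interior regularity upgrade and the careful cutoff/locality argument are essential here, and every constant must be tracked to ensure uniformity in $x_{0}\in\overline{\Omega'}$, so that the final $C$ depends only on the a priori data $\Omega,\Omega',\Lambda,E,\zeta,\nu$.
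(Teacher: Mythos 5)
Your proposal is correct and follows essentially the same route as the paper's proof: start from the constant-coefficient model solutions of Assumption~\ref{assu:runge-zeta}, localise them via Lemma~\ref{lem:prop710}, globalise via Theorem~\ref{thm:quantitative runge} applied with $f=\|\cdot\|_{\sigma}$ on a small ball centred at $x_{0}$, upgrade the resulting $L^{2}$ bound to a $C^{1,1/2}$ bound on a smaller concentric ball via interior elliptic regularity, and conclude by continuity and locality of $\zeta$. The paper dispatches the final step tersely by simply invoking continuity and locality of $\zeta$, whereas you supply an explicit cutoff construction to justify passing from smallness of $\tilde u_{i}-u_{i}^{0}$ on a small ball to the pointwise estimate on $\zeta$ at $x_{0}$; this is a reasonable way to make explicit what the paper leaves implicit, and the two proofs are otherwise the same (your separate parameters $\delta,\epsilon$ and the slightly smaller ball $D=B(x_{0},r_{D})$ are inessential variants).
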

\begin{proof}
By an abuse of notation, several positive constants depending only
on $\Omega$, $\Omega'$, $\Lambda$, $\D$, $\zeta$ and $\nu$  will
be denoted by the same letter $C$.

Take $\epsilon\in(0,1]$ to be chosen later. Let $u_{1}^{0},\dots,u_{n}^{0}\in C^{1,1/2}(\overline{\Omega})$
be as in Assumption~\ref{assu:runge-zeta}. By Lemma~\ref{lem:prop710}
there exist $r\in(0,d(\overline{\Omega'},\bo)/2]$ depending only
on $\Omega$, $\Omega'$, $\Lambda$, $\D$ and $\epsilon$ and $u_{1},\dots,u_{n}\in H^{1}(B(x_{0},r))$
such that $Lu_{i}=0$ in $B(x_{0},r)$ and
\begin{equation}
\|u_{i}-u_{i}^{0}\|_{C^{1,1/2}(\overline{B(x_{0},r)})}\le\epsilon,\qquad i=1,\dots,n.\label{eq:localbound}
\end{equation}
In particular, we have
\begin{equation}
\|u_{i}\|_{C^{1,1/2}(\overline{B(x_{0},r)})}\le\D+1,\qquad i=1,\dots,n.\label{eq:uibound}
\end{equation}

Next, note that $f\colon H^{1/2}(\bo)\to[0,+\infty]$ defined by $f(\phi)=\|\phi\|_{\sigma}$
is positively homogeneous and such that $f^{-1}([0,+\infty))$ is
dense in $H^{1/2}(\bo)$, since $\spn\{e_{k}\}_{k}\subseteq f^{-1}([0,+\infty))$.
Therefore, by Theorem~\ref{thm:quantitative runge} with $D=B(x_{0},r)$
there exists $\tilde{u}_{1},\dots,\tilde{u}_{n}\in H^{1}(\Omega)$
such that $L\tilde{u}_{i}=0$ in $\Omega$ and
\begin{align*}
 & \|u_{i}-\tilde{u}_{i}|_{B(x_{0},r)}\|_{L^{2}(B(x_{0},r))}\le\epsilon\|u_{i}\|_{H^{1}(B(x_{0},r))},\\
 & \|\tilde{u}_{i}|_{\bo}\|_{\sigma}\le C(\Omega,\Omega',\Lambda,\nu,\epsilon,r)\|u_{i}\|_{H^{1}(B(x_{0},r))}.
\end{align*}
Thus, by (\ref{eq:uibound}) we have
\[
\|u_{i}-\tilde{u}_{i}|_{B(x_{0},r)}\|_{L^{2}(B(x_{0},r))}\le C(\D)\epsilon,\qquad\|\tilde{u}_{i}|_{\bo}\|_{\sigma}\le C(\Omega,\Omega',\Lambda,\D,\nu,\epsilon).
\]
Classical elliptic regularity \cite[Theorem 8.32]{GILBARG-2001} yields
\[
\|u_{i}-\tilde{u}_{i}|_{B(x_{0},r)}\|_{C^{1,1/2}(\overline{B(x_{0},r/2)})}\le C\|u_{i}-\tilde{u}_{i}|_{B(x_{0},r)}\|_{L^{2}(B(x_{0},r))}\le C\epsilon.
\]
As a consequence, by (\ref{eq:localbound}) we have
\[
\|u_{i}^{0}-\tilde{u}_{i}\|_{C^{1,1/2}(\overline{B(x_{0},r/2)})}\le\epsilon+C\epsilon\le C\epsilon.
\]
Therefore, by the fact that $\zeta$ is continuous and local, in view
of (\ref{eq:assu-runge}) we can choose $\epsilon\in(0,1]$ depending
only on $C$ and $\zeta$ such that 

\[
|\zeta(\tilde{u}_{1},\dots,\tilde{u}_{n})(x_{0})|\ge1/2.
\]
Choosing $\tilde{\phi}_{i}=\tilde{u}_{i}|_{\bo}$ concludes the proof.
\end{proof}
In the next result we prove a lower bound for the variance of $\zeta(u_{1},\dots,u_{n})$.
\begin{lem}
\label{lem:mean eta}We have
\[
\E\left(\zeta(u_{1},\dots,u_{n})(x)^{2}\right)\ge\eta,\qquad x\in\overline{\Omega'}.
\]
for some $\eta\in(0,1]$ depending only on $\Omega$, $\Omega'$,
 $\Lambda$, $\D$, $\zeta$ and $\nu$.
\end{lem}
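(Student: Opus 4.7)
The plan is to combine the expansion in Lemma \ref{lem:expectaction} with the existence result in Lemma \ref{lem:tilde phi}, via a Cauchy--Schwarz inequality, to transfer the pointwise lower bound on $|\zeta(\tilde u_1,\dots,\tilde u_n)(x)|$ into a lower bound on the variance $\E(\zeta(u_1,\dots,u_n)(x)^2)$. Crucially, since the constant $C$ in Lemma \ref{lem:tilde phi} does not depend on $x \in \overline{\Omega'}$, the resulting $\eta$ will be uniform in $x$.

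Fix $x \in \overline{\Omega'}$ and apply Lemma \ref{lem:tilde phi} to obtain $\tilde\phi_1,\dots,\tilde\phi_n \in H^{1/2}(\bo)$ with $\|\tilde\phi_i\|_\sigma \le C$ and $|\zeta(\tilde u_1,\dots,\tilde u_n)(x)| \ge 1/2$, where $\tilde u_i$ solves the PDE with boundary datum $\tilde\phi_i$. Writing $\tilde\phi_i = \sum_k b_i^k e_k$ with $\sum_k |b_i^k|^2/\sigma_k^2 \le C^2$, linearity gives $\tilde u_i = \sum_k b_i^k z_k$, and the multilinearity of $\zeta$ yields
\[
\zeta(\tilde u_1,\dots,\tilde u_n)(x) = \sum_{k_1,\dots,k_n \in \N} b_1^{k_1}\cdots b_n^{k_n}\,\zeta(z_{k_1},\dots,z_{k_n})(x).
\]

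Next, rewrite each summand by inserting the factor $\sigma_{k_1}\cdots\sigma_{k_n}$ in both the numerator and denominator, and apply Cauchy--Schwarz on the multi-indexed sum:
\[
\bigl|\zeta(\tilde u_1,\dots,\tilde u_n)(x)\bigr|^2 \le \Biggl(\sum_{k_1,\dots,k_n} \frac{|b_1^{k_1}|^2\cdots|b_n^{k_n}|^2}{\sigma_{k_1}^2\cdots\sigma_{k_n}^2}\Biggr) \Biggl(\sum_{k_1,\dots,k_n} \sigma_{k_1}^2\cdots\sigma_{k_n}^2\,\zeta(z_{k_1},\dots,z_{k_n})(x)^2\Biggr).
\]
The first factor equals $\prod_{i=1}^n \|\tilde\phi_i\|_\sigma^2 \le C^{2n}$, while by Lemma \ref{lem:expectaction} the second factor is precisely $\E(\zeta(u_1,\dots,u_n)(x)^2)$. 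Combining with $|\zeta(\tilde u_1,\dots,\tilde u_n)(x)|\ge 1/2$ gives
\[
\E\bigl(\zeta(u_1,\dots,u_n)(x)^2\bigr) \ge \frac{1}{4C^{2n}} =: \eta,
\]
and setting $\eta := \min(1, 1/(4C^{2n}))$ gives the stated bound.

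The argument is essentially routine once the pieces are in place; there is no real obstacle. The only substantive point to check is the legitimacy of the Cauchy--Schwarz step on a possibly-infinite multi-indexed sum, but this is justified by the absolute convergence coming from $\|\tilde\phi_i\|_\sigma < \infty$ together with the finiteness of $\E(\zeta(u_1,\dots,u_n)(x)^2)$, which itself follows from the continuity of $\zeta$ on $C^{1,1/2}(\overline{\Omega'})^n$ and the elliptic estimate $\|u_i\|_{C^{1,1/2}(\overline{\Omega'})} \le C\|\phi_i\|_{H^{1/2}(\bo)}$ combined with square-integrability of $\phi$ (Assumption \ref{assu:phi}).
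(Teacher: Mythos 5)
Your proof is correct and follows essentially the same route as the paper: fix $x$, invoke Lemma~\ref{lem:tilde phi} for the deterministic boundary data $\tilde\phi_i$ with bounded $\|\cdot\|_\sigma$ norm, insert the factors $\sigma_{k_1}\cdots\sigma_{k_n}$, apply Cauchy--Schwarz on the multi-indexed sum, and identify the second factor with $\E(\zeta(u_1,\dots,u_n)(x)^2)$ via Lemma~\ref{lem:expectaction}. The only cosmetic differences are that you track the constant as $C^{2n}$ where the paper absorbs it into a generic $C$, and you explicitly cap $\eta$ at $1$ with a $\min$; both are harmless.
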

\begin{proof}
By an abuse of notation, several positive constants depending only
on $\Omega$, $\Omega'$,  $\Lambda$, $\D$, $\zeta$ and $\nu$
will be denoted by the same letter $C>0$.

Take $x_{0}\in\overline{\Omega'}$. Let $\tilde{\phi}_{1},\dots,\tilde{\phi}_{n}\in H^{1/2}(\bo)$
be as in Lemma~\ref{lem:tilde phi}. Thus, in view of (\ref{eq:norm sigma})
we have
\begin{equation}
\|\tilde{\phi}_{i}\|_{\sigma}^{2}=\sum_{k_{i}\in\N}\frac{|\tilde{a}_{i}^{k_{i}}|^{2}}{\sigma_{k_{i}}^{2}}\le C,\qquad\tilde{\phi}_{i}=\sum_{k_{i}\in\N}\tilde{a}_{i}^{k_{i}}e_{k_{i}}.\label{eq:bound tilde phi}
\end{equation}
In particular, we have $\tilde{u}_{i}=\sum_{k_{i}\in\N}\tilde{a}_{i}^{k_{i}}z_{k_{i}}$,
so that
\[
\begin{split}\frac{1}{2} & \le|\zeta(\tilde{u}_{1},\dots,\tilde{u}_{n})(x_{0})|\\
 & =\left|\sum_{k_{1},\dots,k_{n}\in\N}\tilde{a}_{1}^{k_{1}}\cdots\tilde{a}_{n}^{k_{n}}\zeta(z_{k_{1}},\dots,z_{k_{n}})(x_{0})\right|\\
 & =\left|\sum_{k_{1},\dots,k_{n}\in\N}\frac{\tilde{a}_{1}^{k_{1}}\cdots\tilde{a}_{n}^{k_{n}}}{\sigma_{k_{1}}\cdots\sigma_{k_{n}}}(\sigma_{k_{1}}\cdots\sigma_{k_{n}})\zeta(z_{k_{1}},\dots,z_{k_{n}})(x_{0})\right|.
\end{split}
\]
By using the Cauchy-Schwarz inequality we obtain
\[
\frac{1}{2}\le\left(\sum_{k_{1},\dots,k_{n}}\frac{(\tilde{a}_{1}^{k_{1}})^{2}\cdots(\tilde{a}_{n}^{k_{n}})^{2}}{\sigma_{k_{1}}^{2}\cdots\sigma_{k_{n}}^{2}}\right)^{\frac{1}{2}}\left(\sum_{k_{1},\dots,k_{n}}(\sigma_{k_{1}}^{2}\cdots\sigma_{k_{n}}^{2})\zeta(z_{k_{1}},\dots,z_{k_{n}})(x_{0})^{2}\right)^{\frac{1}{2}}.
\]
Observe that, by (\ref{eq:bound tilde phi}) we have
\[
\sum_{k_{1},\dots,k_{n}\in\N}\frac{(\tilde{a}_{1}^{k_{1}})^{2}\cdots(\tilde{a}_{n}^{k_{n}})^{2}}{\sigma_{k_{1}}^{2}\cdots\sigma_{k_{n}}^{2}}=\prod_{i=1}^{n}\sum_{k_{i}\in\N}\frac{(\tilde{a}_{i}^{k_{i}})^{2}}{\sigma_{k_{i}}^{2}}=\prod_{i=1}^{n}\|\tilde{\phi}_{i}\|_{\sigma}^{2}\le C.
\]
As a consequence, by Lemma~\ref{lem:expectaction} we obtain
\[
\frac{1}{2}\le C\E\left(\zeta(u_{1},\dots,u_{n})(x_{0})^{2}\right)^{\frac{1}{2}},
\]
and the result follows.
\end{proof}
We say that a real-valued random variable $X$ is $\gamma$-subexponential
if there exists $K>0$ such that $\E\exp(|X|^{\gamma}/K^{\gamma})\le2$
\cite{2018-Vershynin}. In this case, we write
\[
\|X\|_{\psi_{\gamma}}=\inf\{t>0:\E\exp(|X|^{\gamma}/t^{\gamma})\le2\}.
\]
In the case $\gamma=2$, we say that $X$ is sub-Gaussian. We now
show that $\|\phi\|_{H^{\frac{1}{2}}(\bo)}$ is sub-Gaussian. 
\begin{lem}
If $\phi\sim\nu$, then the real random variable $\|\phi\|_{H^{\frac{1}{2}}(\bo)}$
is sub-Gaussian. In particular, there exist $c_{1},c_{2}>0$ depending
only on $\nu$ such that
\begin{equation}
\P(\|\phi\|_{H^{\frac{1}{2}}(\bo)}\ge t)\le2\exp(-c_{1}t^{2}),\qquad t\ge0,\label{eq:norm phi tail}
\end{equation}
and
\begin{equation}
\E\exp(\|\phi\|_{H^{\frac{1}{2}}(\bo)}^{2}/c_{2})\le2.\label{eq:norm phi sub}
\end{equation}
\end{lem}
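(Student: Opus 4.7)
The plan is to combine the Karhunen--Lo\`eve decomposition (\ref{eq:phi_i}) with the sub-Gaussian hypothesis of Assumption~\ref{assu:phi} and a Minkowski-based moment estimate, then appeal to the standard equivalences between sub-Gaussian characterisations of a real random variable.

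First, Parseval's identity gives $\|\phi\|_{H^{\frac{1}{2}}(\bo)}^{2} = \sum_{k\in\N} (a^{k})^{2}$. Applying the sub-Gaussian hypothesis of Assumption~\ref{assu:phi} with the test vector $\psi = e_{k}$ produces $\tau(a^{k}) \le C\sigma_{k}$, with $C$ depending only on $\nu$. Each $a^{k}$ is therefore a centered real sub-Gaussian random variable, and the usual $L^{q}$ moment bound for such variables yields $\|a^{k}\|_{L^{2p}} \le K\sqrt{p}\,\sigma_{k}$ for every $p\ge 1$, with $K$ again depending only on $\nu$.

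Second, since the $(a^{k})^{2}$ are non-negative, monotone convergence together with the triangle inequality in $L^{p}$ gives
\[
\bigl\|\|\phi\|_{H^{\frac{1}{2}}(\bo)}^{2}\bigr\|_{L^{p}}
=\Bigl\|\sum_{k\in\N}(a^{k})^{2}\Bigr\|_{L^{p}}
\le\sum_{k\in\N}\bigl\|(a^{k})^{2}\bigr\|_{L^{p}}
=\sum_{k\in\N}\|a^{k}\|_{L^{2p}}^{2}
\le K^{2}p\sum_{k\in\N}\sigma_{k}^{2}.
\]
The series $\sum_{k}\sigma_{k}^{2}=\operatorname{tr}(\Sigma)$ is finite because $\Sigma$ is trace-class by Assumption~\ref{assu:phi}. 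Taking $(2p)$-th roots, one obtains $\bigl\|\|\phi\|_{H^{\frac{1}{2}}(\bo)}\bigr\|_{L^{2p}} \le C'\sqrt{p}$ for every $p\ge 1$, with $C'$ depending only on $\nu$.

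Third, the moment growth $\|X\|_{L^{2p}} \lesssim \sqrt{p}$ is one of the standard equivalent characterisations of sub-Gaussianity for a real random variable \cite{2018-Vershynin}, and it immediately implies both the tail bound (\ref{eq:norm phi tail}) and the exponential-moment bound (\ref{eq:norm phi sub}) with constants $c_{1},c_{2}$ depending only on $\nu$. The one point to be careful about is that, outside the Gaussian case, the Karhunen--Lo\`eve coefficients $a^{k}$ are only uncorrelated and not independent, so any argument that would require independence (multiplying one-dimensional moment generating functions, for instance) is illegitimate here; Minkowski's inequality sidesteps this precisely because the squared coefficients are non-negative, which is the structural reason the proof goes through.
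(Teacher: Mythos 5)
Your proof is correct, and it takes a genuinely different route from the paper's. The paper disposes of the lemma almost entirely by citation: it invokes \cite{Koltchinskii-Lounici-2017,fukuda-1990} for the fact that the real random variable $\|\phi\|_{H^{1/2}(\bo)}$ inherits sub-Gaussianity from the Hilbert-space sub-Gaussianity of $\phi$, and then pulls the two displayed bounds out of \cite[Proposition~2.5.2]{2018-Vershynin} and the definition of the $\psi_2$ norm. You instead prove the key fact from scratch: Parseval in the Karhunen--Lo\`eve basis writes $\|\phi\|_{H^{1/2}(\bo)}^{2}=\sum_{k}(a^{k})^{2}$; testing the sub-Gaussian hypothesis against $\psi=e_{k}$ gives $\tau(a^{k})\le C\sigma_{k}$ with $C$ uniform in $k$; the standard moment bound for a real sub-Gaussian variable gives $\|a^{k}\|_{L^{2p}}\lesssim\sqrt{p}\,\sigma_{k}$; and Minkowski's inequality in $L^{p}$ applied to the non-negative summands $(a^{k})^{2}$ yields $\|\,\|\phi\|_{H^{1/2}(\bo)}\|_{L^{2p}}\lesssim\sqrt{p}\,(\operatorname{tr}\Sigma)^{1/2}$, which is the moment characterisation of sub-Gaussianity. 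The remark at the end is the right one to make: the Karhunen--Lo\`eve coefficients are only uncorrelated, not independent in general, so one cannot multiply moment generating functions, and Minkowski sidesteps this because the squared coefficients are non-negative. Both arguments are valid; the paper's is shorter because it outsources the work, while yours is self-contained and makes explicit where the trace-class hypothesis on $\Sigma$ and the uniformity of the sub-Gaussian constant enter, which is useful for tracking the dependence of $c_{1},c_{2}$ on $\nu$.
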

\begin{proof}
Since $\phi$ is a sub-Gaussian random variable in $H^{1/2}(\bo)$,
we have that also $\|\phi\|_{H^{\frac{1}{2}}(\bo)}$ is sub-Gaussian
\cite{Koltchinskii-Lounici-2017,fukuda-1990}. The tail bound (\ref{eq:norm phi tail})
follows by \cite[Proposition~2.5.2]{2018-Vershynin}, and (\ref{eq:norm phi sub})
follows from the fact that $\left\Vert \|\phi\|_{H^{\frac{1}{2}}(\bo)}\right\Vert _{\psi_{2}}$
is finite by definition.
\end{proof}
Next, we show that the random variable $\zeta(u_{1},\dots,u_{n})(x)^{2}$
is $\frac{1}{n}$-subexponential.
\begin{lem}
\label{lem:subexp}Take $x\in\overline{\Omega'}$. The random variable
$\zeta(u_{1},\dots,u_{n})(x)^{2}$ is $\frac{1}{n}$-subexponential
and
\[
\|\zeta(u_{1},\dots,u_{n})(x)^{2}\|_{\psi_{\frac{1}{n}}}\le C,
\]
for some $C$ depending only on $\Omega$, $\Omega'$, $\Lambda$,
$\D$, $\zeta$ and $\nu$.
\end{lem}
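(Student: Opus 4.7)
The plan is to dominate the random variable $\zeta(u_1,\dots,u_n)(x)^2$ pointwise by a product of i.i.d.\ $\psi_1$ random variables on the real line, and then extract the $\psi_{1/n}$ norm by an AM--GM argument.

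First I would use the multilinearity and boundedness of $\zeta\colon C^{1,1/2}(\overline{\Omega'})^n \to C^{0,1/2}(\overline{\Omega'})$ to bound
\[
|\zeta(u_1,\dots,u_n)(x)| \le M \prod_{i=1}^n \|u_i\|_{C^{1,1/2}(\overline{\Omega'})},
\]
uniformly in $x\in\overline{\Omega'}$, with $M$ depending only on $\zeta$. Since $\Omega'\Subset\Omega$, well-posedness (\ref{eq:0-eig}) together with classical interior Schauder estimates (using $a\in W^{1,\infty}$, $q\in L^\infty$) gives $\|u_i\|_{C^{1,1/2}(\overline{\Omega'})} \le C\|\phi_i\|_{H^{1/2}(\bo)}$, where $C$ depends only on $\Omega, \Omega', \Lambda$. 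This produces the deterministic bound
\[
\zeta(u_1,\dots,u_n)(x)^2 \le C' \prod_{i=1}^n \|\phi_i\|_{H^{1/2}(\bo)}^2, \qquad x\in\overline{\Omega'}.
\]

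Next I would invoke the previous lemma to deduce that each $\|\phi_i\|_{H^{1/2}(\bo)}$ is sub-Gaussian with $\psi_2$ norm bounded in terms of $\nu$ alone. Squaring, the variables $Y_i := \|\phi_i\|_{H^{1/2}(\bo)}^2$ are i.i.d.\ and $\psi_1$ with $\|Y_i\|_{\psi_1}\le K$, where $K$ depends only on $\nu$.

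The main step — and the only one requiring an idea beyond routine estimates — is to control the $\psi_{1/n}$ norm of $Z:=\prod_{i=1}^n Y_i$. By the AM--GM inequality, $Z^{1/n} \le \tfrac1n\sum_i Y_i$, so by independence
\[
\E\exp\bigl(Z^{1/n}/t^{1/n}\bigr) \le \prod_{i=1}^n \E\exp\bigl(Y_i/(nt^{1/n})\bigr).
\]
When $nt^{1/n}\ge K$, Jensen's inequality applied to the concave map $x\mapsto x^{K/(nt^{1/n})}$ gives $\E\exp(Y_i/(nt^{1/n})) \le 2^{K/(nt^{1/n})}$; the product is then $\le 2^{K/t^{1/n}}$, which is $\le 2$ provided $t\ge K^n$. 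Hence $\|Z\|_{\psi_{1/n}}\le K^n$. Combining with the deterministic bound and the homogeneity $\|\alpha X\|_{\psi_{1/n}} = |\alpha|\,\|X\|_{\psi_{1/n}}$ yields the desired estimate, with a constant that depends on $\Omega, \Omega', \Lambda, \D, \zeta, \nu$ (the dependence on $n$ being absorbed into $\zeta$). The one point to watch is simply the scaling in the definition of $\|\cdot\|_{\psi_\gamma}$, so that the substitution $u = t^{1/n}$ is handled correctly.
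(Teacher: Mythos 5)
Your argument is correct and reaches the same conclusion, but it diverges from the paper's proof at the key step of controlling $\|\prod_i Y_i\|_{\psi_{1/n}}$. The paper follows the scheme of Vershynin's Lemma~2.7.7: after bounding $|\zeta(u_1,\dots,u_n)(x)|\le C\prod_i\|\phi_i\|_{H^{1/2}(\bo)}$, it applies AM--GM once to write $\exp(|\zeta|^{2/n}/K)\le\prod_i\exp\bigl(\tfrac{C}{Kn}\|\phi_i\|^2\bigr)$, then applies AM--GM and the power--mean (H\"older) inequality in $\R^n$ a second time to dominate this product by $\tfrac1n\sum_i\exp\bigl(\tfrac{C}{K}\|\phi_i\|^2\bigr)$, and finally takes the expectation term by term. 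Crucially, the last step needs only linearity of expectation and the marginal bound $\E\exp(\|\phi\|^2/c_2)\le 2$; independence of the $\phi_i$ is never used. You instead keep the product form, factor the expectation via independence, and then shrink the exponent in each factor by Jensen applied to $x\mapsto x^{\alpha}$ with $\alpha=K/(nt^{1/n})\le 1$, which gives $\E\exp(Y_i/(nt^{1/n}))\le 2^{\alpha}$ and hence $\|Z\|_{\psi_{1/n}}\le K^n$. Both computations are clean and give the same constant up to absorbing $n$ into $\zeta$; the paper's route is marginally more general since it applies verbatim to identically distributed but possibly dependent boundary values, whereas your argument genuinely relies on the i.i.d.\ hypothesis. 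Everything else — the deterministic pointwise bound via multilinearity of $\zeta$, the Schauder estimate $\|u_i\|_{C^{1,1/2}(\overline{\Omega'})}\le C\|\phi_i\|_{H^{1/2}(\bo)}$, the $\psi_2\Rightarrow\psi_1$ step for $Y_i=\|\phi_i\|^2$, the monotonicity and homogeneity of $\|\cdot\|_{\psi_{1/n}}$ — matches the paper.
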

\begin{proof}
By an abuse of notation, several positive constants depending only
on $\Omega$, $\Omega'$, $\Lambda$, $\D$, $\zeta$ and $\nu$ will
be denoted by the same letter $C>0$.

Using that $\zeta$ is bounded we obtain
\[
\|\zeta(u_{1},\dots,u_{n})\|_{C^{0,1/2}(\overline{\Omega'})}\le C\prod_{i=1}^{n}\|u_{i}\|_{C^{1,1/2}(\overline{\Omega'})}.
\]
Thus, classical elliptic regularity \cite[Theorem 8.32]{GILBARG-2001}
yields
\begin{equation}
|\zeta(u_{1},\dots,u_{n})(x)|\le\|\zeta(u_{1},\dots,u_{n})\|_{C^{0,1/2}(\overline{\Omega'})}\le C\prod_{i=1}^{n}\|\phi_{i}\|_{H^{\frac{1}{2}}(\bo)}.\label{eq:holder estimate}
\end{equation}

Take now $K>0$ to be chosen later. We now argue as in \cite[Lemma~2.7.7]{2018-Vershynin}.
By using (\ref{eq:holder estimate}) and the inequality of arithmetic
and geometric means we readily derive
\[
\begin{split}\exp\left(\frac{|\zeta(u_{1},\dots,u_{n})(x)|^{2/n}}{K}\right) & \le\exp\left(\frac{C}{K}\left(\prod_{i=1}^{n}\|\phi_{i}\|_{H^{\frac{1}{2}}(\bo)}\right)^{2/n}\right)\\
 & \le\exp\left(\frac{C}{K}\left(\frac{\sum_{i=1}^{n}\|\phi_{i}\|_{H^{\frac{1}{2}}(\bo)}}{n}\right)^{2}\right)\\
 & \le\exp\left(\frac{C}{Kn}\sum_{i=1}^{n}\|\phi_{i}\|_{H^{\frac{1}{2}}(\bo)}^{2}\right)\\
 & =\prod_{i=1}^{n}\exp\left(\frac{C}{Kn}\|\phi_{i}\|_{H^{\frac{1}{2}}(\bo)}^{2}\right).
\end{split}
\]
Using again the inequality of arithmetic and geometric means and Hölder
inequality in $\R^{n}$ we obtain
\[
\begin{split}\exp\left(\frac{|\zeta(u_{1},\dots,u_{n})(x)|^{2/n}}{K}\right) & \le\left(\frac{1}{n}\sum_{i=1}^{n}\exp\left(\frac{C}{Kn}\|\phi_{i}\|_{H^{\frac{1}{2}}(\bo)}^{2}\right)\right)^{n}\\
 & \le\frac{n^{n-1}}{n^{n}}\sum_{i=1}^{n}\exp\left(\frac{C}{K}\|\phi_{i}\|_{H^{\frac{1}{2}}(\bo)}^{2}\right)\\
 & \le\frac{1}{n}\sum_{i=1}^{n}\exp\left(\frac{C}{K}\|\phi_{i}\|_{H^{\frac{1}{2}}(\bo)}^{2}\right).
\end{split}
\]
Therefore, in view of (\ref{eq:norm phi sub}), we have
\[
\E\exp\left(\frac{|\zeta(u_{1},\dots,u_{n})(x)|^{2/n}}{c_{2}C}\right)\le2,
\]
and the result follows.
\end{proof}
We recall the following concentration inequality for $\gamma$-subexponential
random variables, which immediately follows from \cite[Theorem~1.3]{2021-gotze-et-al}
(see also \cite{Hitczenko-etal-1997}).
\begin{lem}
\label{lem:concentration}Let $X_{1},\dots,X_{N}$ be i.i.d.\ $\gamma$-subexponential
real random variables such that $\E X_{l}=\mu$ and $\|X_{l}\|_{\psi_{\gamma}}\le M$
for some $\gamma\in(0,1]$. Then
\[
\P\left(\left|\frac{1}{N}\sum_{l=1}^{N}X_{l}-\mu\right|\ge t\right)\le2\exp\left(-C_{\gamma}\min\left(Nt^{2}/M^{2},t^{\gamma}N^{\gamma}/M^{\gamma}\right)\right).
\]
\end{lem}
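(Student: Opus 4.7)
The plan is to invoke \cite[Theorem~1.3]{2021-gotze-et-al} essentially verbatim, since the statement is precisely a Bernstein-type concentration bound for sums of i.i.d.\ $\psi_\gamma$ random variables with $\gamma\in(0,1]$, already in the exact form required. For a self-contained derivation I would follow the classical Chernoff-plus-moments route, adapted to accommodate the heavy-tailed regime $\gamma<1$, and organise it in three steps.

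First I would reduce to the centered case: since $\|X_l-\mu\|_{\psi_\gamma}\le C_\gamma\|X_l\|_{\psi_\gamma}$ by a universal constant, one may assume $\mu=0$ after replacing $M$ by $C_\gamma M$. Setting $S_N=\sum_{l=1}^N X_l$, the goal becomes
\[
\P(|S_N|\ge Nt)\le 2\exp\bigl(-C_\gamma\min(Nt^2/M^2,\ N^\gamma t^\gamma/M^\gamma)\bigr).
\]
In the small-deviation regime $t\le M$, the $\psi_\gamma$ hypothesis (with $\gamma\le 1$) yields a sub-Gaussian moment generating function near the origin, $\E e^{\lambda X_l}\le\exp(C\lambda^2 M^2)$ for $|\lambda|\le c/M$, and a standard Chernoff bound optimised in $\lambda$ produces the Gaussian branch $\exp(-CNt^2/M^2)$, exactly as in the classical sub-Gaussian Bernstein inequality.

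The large-deviation regime $t>M$ is where the real difficulty lies, because for $\gamma<1$ the MGF of $X_l$ need not exist for any positive $\lambda$, so there is no direct exponential Chernoff analogue. Here I would pass through moments: the $\psi_\gamma$ condition gives the bound $\E|X_l|^p\le(Cp)^{p/\gamma}M^p$, and a Rosenthal-type inequality for centered i.i.d.\ sums then yields $\E|S_N|^p\le C^p\bigl(M^p N^{p/2}p^{p/2}+M^p N p^{p/\gamma}\bigr)$. Applying Markov's inequality at the heavy-tailed-optimal $p\sim (Nt/M)^\gamma$ converts the second term into the stretched-exponential branch $\exp(-C(Nt/M)^\gamma)$. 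The delicate point, and the main obstacle, is the fine balancing of the two Rosenthal contributions together with the verification that this optimal $p$ indeed lies in the regime where the heavy-tailed term dominates; this is precisely what is carried out in \cite[Theorem~1.3]{2021-gotze-et-al} (see also \cite{Hitczenko-etal-1997}), so in practice I would quote their result rather than reproduce the optimisation.
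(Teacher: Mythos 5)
Your proposal's primary plan—invoking \cite[Theorem~1.3]{2021-gotze-et-al} directly—is exactly what the paper does, as this lemma is stated as an immediate consequence of that result (with \cite{Hitczenko-etal-1997} also cited). The additional sketch of a self-contained Chernoff-plus-Rosenthal derivation goes beyond the paper's one-line justification but is a reasonable outline of how such bounds are proved.
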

We now apply this result to the random variable $\zeta(u_{1},\dots,u_{n})(x)^{2}$:
as in the statement of Theorem~\ref{thm:constraints}, let $\phi_{i}^{l}\sim\nu$
be sampled i.i.d.\ and denote the corresponding solutions to (\ref{eq:ui dirichelt})
by $u_{i}^{l}$.
\begin{lem}
\label{lem:concentration application}There exist $C',C''>0$ depending
only on $\Omega$, $\Omega'$,  $\Lambda$, $\D$, $\zeta$ and $\nu$
such that
\[
\P\left(\frac{1}{N}\sum_{l=1}^{N}\zeta(u_{1}^{l},\dots,u_{n}^{l})(x)^{2}\le C''\right)\le2\exp\left(-C'N^{1/n}\right),\qquad x\in\overline{\Omega'}.
\]
\end{lem}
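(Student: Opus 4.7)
The plan is to apply the subexponential concentration bound of Lemma~\ref{lem:concentration} directly to the i.i.d.\ random variables $X_l := \zeta(u_1^l,\dots,u_n^l)(x)^2$, exploiting the two facts already proved about them: the lower bound on the mean, $\E X_l \ge \eta$ from Lemma~\ref{lem:mean eta}, and the $(1/n)$-subexponential norm bound $\|X_l\|_{\psi_{1/n}}\le M$ from Lemma~\ref{lem:subexp}, where $\eta$ and $M$ depend only on the a priori data.

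First I would set $C'' := \eta/2$. Writing $\mu := \E X_l \ge \eta$, the event $\frac{1}{N}\sum_{l=1}^N X_l \le C''$ is contained in the deviation event $\bigl|\frac{1}{N}\sum_{l=1}^N X_l - \mu\bigr| \ge \eta/2$, since $\mu - C'' \ge \eta/2$. Applying Lemma~\ref{lem:concentration} with $\gamma = 1/n$ and $t = \eta/2$ then yields
\[
\P\Bigl(\bigl|\tfrac{1}{N}\textstyle\sum_{l=1}^N X_l-\mu\bigr|\ge\eta/2\Bigr)\le 2\exp\Bigl(-C_{1/n}\min\bigl(N(\eta/2)^2M^{-2},\,(\eta/2)^{1/n}N^{1/n}M^{-1/n}\bigr)\Bigr).
\]

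The only remaining step is to simplify the minimum to a clean $N^{1/n}$ rate. Since $\eta/2$ and $M$ are fixed constants depending only on the priors, the first argument of the minimum is of order $N$ while the second is of order $N^{1/n}$. Because $1/n\le 1$, we have $N^{1-1/n}\ge 1$ for $N\ge 1$, so the second argument realizes the minimum as soon as $N$ exceeds a threshold $N_0$ depending only on $\eta$, $M$ and $n$; in that regime the bound becomes $2\exp(-C'N^{1/n})$ with $C' := C_{1/n}(\eta/2)^{1/n}M^{-1/n}$. For the finitely many values $N\le N_0$, I would simply shrink $C'$ further so that $2\exp(-C'N_0^{1/n})\ge 1$, rendering the inequality trivial there, and then take the final $C'$ to be the smaller of the two constants.

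There is no genuine analytical obstacle at this stage, since Lemmas~\ref{lem:mean eta} and~\ref{lem:subexp} do the heavy lifting and this result is essentially a packaging of an off-the-shelf concentration inequality. The only point requiring care is the bookkeeping of constants through the minimum in Lemma~\ref{lem:concentration} and the choice of a single $C'$ that works uniformly in $N\in\N$. The exponent $1/n$ in the concentration rate is exactly where the multilinearity of $\zeta$, and the resulting $n$-fold product structure of $X_l$ in the $\phi_i^l$, enters the final estimate.
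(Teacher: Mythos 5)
Your proposal is correct and follows essentially the same route as the paper's own proof: set $X_l=\zeta(u_1^l,\dots,u_n^l)(x)^2$, take $C''=\eta/2$, contain the event in the deviation event, and invoke Lemma~\ref{lem:concentration} with the inputs from Lemmas~\ref{lem:mean eta} and~\ref{lem:subexp}. The only cosmetic difference is in handling the minimum: the paper simply observes that for $N\ge 1$ both arguments are bounded below by a constant times $N^{1/n}$ (since $N\ge N^{1/n}$), so no threshold $N_0$ or case split is needed, whereas your two-regime argument works but is slightly more elaborate than necessary.
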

\begin{proof}
By an abuse of notation, several positive constants depending only
on $\Omega$, $\Omega'$, $\Lambda$, $\D$, $\zeta$ and $\nu$ will
be denoted by the same letter $C>0$.

Take $x\in\overline{\Omega'}$ and set $X_{l}=\zeta(u_{1}^{l},\dots,u_{n}^{l})(x)^{2}$.
By Lemma~\ref{lem:mean eta} we have $\mu:=\E X_{l}\ge\eta$. By
Lemma~\ref{lem:subexp}, $X_{l}$ is $\frac{1}{n}$-subexponential
and $\|X_{l}\|_{\psi_{\frac{1}{n}}}\le C$. We readily derive
\[
\begin{split}\P\left(\frac{1}{N}\sum_{l=1}^{N}X_{l}\le\frac{\eta}{2}\right) & \le\P\left(\frac{1}{N}\sum_{l=1}^{N}X_{l}\le\mu-\frac{\eta}{2}\right)\\
 & =\P\left(\mu-\frac{1}{N}\sum_{l=1}^{N}X_{l}\ge\frac{\eta}{2}\right)\\
 & \le\P\left(\left|\frac{1}{N}\sum_{l=1}^{N}X_{l}-\mu\right|\ge\frac{\eta}{2}\right).
\end{split}
\]
By Lemma~\ref{lem:concentration} we have
\[
\begin{split}\P\left(\left|\frac{1}{N}\sum_{l=1}^{N}X_{l}-\mu\right|\ge\frac{\eta}{2}\right) & \le2\exp\left(-C\min\left(N\eta^{2}/C^{2},\eta^{1/n}N^{1/n}/C^{1/n}\right)\right)\\
 & \le2\exp\left(-C\min\left(N\eta^{2},\eta^{1/n}N^{1/n}\right)\right)\\
 & \le2\exp\left(-CN^{1/n}\right).
\end{split}
\]
Setting $C''=\frac{\eta}{2}$, by these two estimates we obtain
\[
\P\left(\frac{1}{N}\sum_{l=1}^{N}X_{l}\le C''\right)\le2\exp\left(-CN^{1/n}\right),
\]
as desired.
\end{proof}
We are now ready to prove Theorem~\ref{thm:constraints}.
\begin{proof}[Proof of Theorem~\ref{thm:constraints}]
By an abuse of notation, several positive constants depending only
on $\Omega$, $\Omega'$,  $\Lambda$, $\D$, $\zeta$ and $\nu$
will be denoted by the same letter $C$. 

Take $t\ge0$ to be chosen later. By (\ref{eq:norm phi tail}) we
have that
\[
\P(\|\phi_{i}^{l}\|_{H^{\frac{1}{2}}(\bo)}\ge t)\le2\exp(-Ct^{2}),\qquad i=1,\dots,n,\;l=1,\dots,N.
\]
Thus, the union bound yields
\[
\P\left(\max_{i,l}\|\phi_{i}^{l}\|_{H^{\frac{1}{2}}(\bo)}\ge t\right)\le2nN\exp(-Ct^{2}).
\]
In other words, with probability greater than or equal to $1-2nN\exp(-Ct^{2}),$
we have $\max_{i,l}\|\phi_{i}^{l}\|_{H^{\frac{1}{2}}(\bo)}\le t$.
By (\ref{eq:holder estimate}) we have
\[
\|\zeta(u_{1}^{l},\dots,u_{n}^{l})\|_{C^{0,1/2}(\overline{\Omega'})}\le C\prod_{i=1}^{n}\|\phi_{i}\|_{H^{\frac{1}{2}}(\bo)}\le Ct^{n}.
\]
In other words, using for simplicity the equivalent $\sup$ norm in
$\R^{d}$, we have
\begin{equation}
|\zeta(u_{1}^{l},\dots,u_{n}^{l})(x)-\zeta(u_{1}^{l},\dots,u_{n}^{l})(y)|\le Ct^{n}\|x-y\|_{\infty}^{1/2},\qquad x,y\in\overline{\Omega'}.\label{eq:zeta holder}
\end{equation}
Let $C''>0$ be given by Lemma~\ref{lem:concentration application}
and set $r=\frac{C''}{\left(2Ct^{n}\right)^{2}}$. It is possible
to cover $\overline{\Omega'}$ with $M\le Cr^{-d}$ balls (with respect
to the norm $\|\cdot\|_{\infty}$):
\begin{equation}
\overline{\Omega'}\subseteq\bigcup_{j=1}^{M}B_{\infty}(x_{j},r),\label{eq:cover}
\end{equation}
where $x_{1},\dots,x_{M}\in\overline{\Omega'}$. In view of (\ref{eq:zeta holder}),
for $j=1,\dots,M$ and $x\in B_{\infty}(x_{j},r)\cap\overline{\Omega'}$
we have
\[
|\zeta(u_{1}^{l},\dots,u_{n}^{l})(x)-\zeta(u_{1}^{l},\dots,u_{n}^{l})(x_{j})|\le Ct^{n}\|x-x_{j}\|_{\infty}^{1/2}\le Ct^{n}r^{1/2}=\sqrt{C''}/2,
\]
so that
\begin{equation}
\begin{split}|\zeta(u_{1}^{l},\dots,u_{n}^{l})(x)| & \ge|\zeta(u_{1}^{l},\dots,u_{n}^{l})(x_{j})|-|\zeta(u_{1}^{l},\dots,u_{n}^{l})(x)-\zeta(u_{1}^{l},\dots,u_{n}^{l})(x_{j})|\\
 & \ge|\zeta(u_{1}^{l},\dots,u_{n}^{l})(x_{j})|-\sqrt{C''}/2.
\end{split}
\label{eq:reverse-holder}
\end{equation}

Next, by Lemma~\ref{lem:concentration application}, 
\[
\P\left(\frac{1}{N}\sum_{l=1}^{N}\zeta(u_{1}^{l},\dots,u_{n}^{l})(x_{j})^{2}\le C''\right)\le2\exp\left(-C'N^{1/n}\right),\qquad j=1,\dots,M.
\]
Thanks to the union bound, we have that
\[
\frac{1}{N}\sum_{l=1}^{N}\zeta(u_{1}^{l},\dots,u_{n}^{l})(x_{j})^{2}\ge C'',\qquad j=1,\dots,M,
\]
with probability greater than or equal to $1-2M\exp\left(-C'N^{1/n}\right)$.
Note that the previous condition implies that for every $j=1,\dots,M$
there exists $l=1,\dots,N$ such that $|\zeta(u_{1}^{l},\dots,u_{n}^{l})(x_{j})|\ge\sqrt{C''}$.
Thus, in view of (\ref{eq:cover}) and (\ref{eq:reverse-holder})
we have that for every $x\in\overline{\Omega'}$ there exists $l=1,\dots,N$
such that $|\zeta(u_{1}^{l},\dots,u_{n}^{l})(x)|\ge\sqrt{C''}/2$.
In other words, we proved (\ref{eq:max}) with $\Cppp=\sqrt{C''}/2.$
This happens with probability greater than or equal to
\[
1-2M\exp\left(-C'N^{1/n}\right)-2nN\exp(-Ct^{2}).
\]
Now, choose $t=N^{\frac{1}{2n}},$ so that $r=\frac{C}{N}$ and $M\le Cr^{-d}\le CN^{d}$,
so that
\[
\begin{split}1-2M\exp\left(-C'N^{1/n}\right) & -2nN\exp(-Ct^{2})\\
 & \ge1-CN^{d}\exp\left(-C'N^{1/n}\right)-2nN\exp(-CN^{1/n})\\
 & \ge1-\Cp N^{d}\exp\left(-\Cpp N^{1/n}\right),
\end{split}
\]
where we have used that $N\ge n^{\frac{1}{d-1}}$. This concludes
the proof.
\end{proof}

\section{\label{sec:Proof-of-Runge}Proof of Theorem~\ref{thm:quantitative runge}}

We need three technical lemmata.
\begin{lem}
\label{lem:weak*}Let $\Omega\subseteq\R^{d}$ be a bounded Lipschitz
domain, $p\in(1,+\infty)$ and consider the canonical embedding $I\colon L^{\infty}(\Omega)\to W^{-1,p}(\Omega):=\bigl(W_{0}^{1,p'}(\Omega)\bigr)'$
defined by
\[
\langle Iq,v\rangle_{W^{-1,p}(\Omega)\times W_{0}^{1,p'}(\Omega)}=\int_{\Omega}qv\,dx,\qquad q\in L^{\infty}(\Omega),v\in W_{0}^{1,p'}(\Omega).
\]
If $q_{n},q\in L^{\infty}(\Omega)$ and $q_{n}\to q$ weak{*} in $L^{\infty}(\Omega),$
then $Iq_{n}\to Iq$ in $W^{-1,p}(\Omega)$. 
\end{lem}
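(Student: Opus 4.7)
The plan is to combine the Rellich--Kondrachov compact embedding with the elementary principle that a weak-$*$ convergent sequence in a dual space converges \emph{uniformly} on norm-compact subsets of its predual. This reduces the problem to checking convergence against finitely many fixed test functions, where weak-$*$ convergence applies by definition.

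First, I would use the uniform boundedness principle to obtain, from the weak-$*$ convergence $q_n\to q$, a uniform bound $\|q_n\|_{L^\infty(\Omega)}\le M$. By linearity of $I$ it is enough to treat the case $q=0$, so assume $q_n\to 0$ weak-$*$ with $\|q_n\|_\infty\le M$. Unfolding the dual norm,
\[
\|Iq_n\|_{W^{-1,p}(\Omega)} = \sup\Bigl\{\Bigl|\int_\Omega q_n v\,dx\Bigr|\;:\;v\in W_0^{1,p'}(\Omega),\;\|v\|_{W_0^{1,p'}}\le 1\Bigr\}.
\]

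Next, since $\Omega$ is bounded and Lipschitz and $p'\in(1,\infty)$, the Rellich--Kondrachov theorem gives a compact embedding $W_0^{1,p'}(\Omega)\hookrightarrow L^{p'}(\Omega)$, and composing with the continuous inclusion $L^{p'}(\Omega)\hookrightarrow L^{1}(\Omega)$ (valid because $|\Omega|<+\infty$) shows that the unit ball $B$ of $W_0^{1,p'}(\Omega)$ is relatively compact in $L^1(\Omega)$. Given $\epsilon>0$, total boundedness supplies a finite $\epsilon$-net $v_1,\dots,v_N\in B$ in the $L^1$ norm, and for every $v\in B$ a triangle-inequality step yields
\[
\Bigl|\int_\Omega q_n v\,dx\Bigr|\le\Bigl|\int_\Omega q_n v_k\,dx\Bigr|+\|q_n\|_\infty\|v-v_k\|_{L^1}\le\max_{j}\Bigl|\int_\Omega q_n v_j\,dx\Bigr|+M\epsilon,
\]
so taking the supremum over $v\in B$ produces $\|Iq_n\|_{W^{-1,p}}\le\max_j|\int_\Omega q_n v_j\,dx|+M\epsilon$.

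Finally, since each $v_j$ lies in $L^1(\Omega)$, the weak-$*$ convergence gives $\int_\Omega q_n v_j\,dx\to 0$ as $n\to\infty$; the maximum over the finite set $\{1,\dots,N\}$ then vanishes, and letting first $n\to\infty$ and then $\epsilon\to 0$ concludes. I do not anticipate any real obstacle here: the argument is a routine application of compactness plus the weak-$*$-is-uniform-on-compacts principle. The only detail worth double-checking is that the chosen test functions land in $L^1$ (which is automatic from $v_k\in W_0^{1,p'}\subset L^{p'}\subset L^1$ and $|\Omega|<\infty$), so that testing against them is legitimate for the weak-$*$ convergence.
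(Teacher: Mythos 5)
Your proof is correct. It rests on the same engine as the paper's proof — the Rellich--Kondrachov compactness — but packages it differently. The paper first observes that $q_n\to q$ weakly in $L^p(\Omega)$, builds the compact operator $\iota^*T\colon L^p(\Omega)\to W^{-1,p}(\Omega)$ (adjoint of a compact embedding composed with the Riesz-type isomorphism), invokes the abstract fact that compact operators send weakly convergent sequences to norm-convergent ones, and then verifies that $\iota^*T$ agrees with $I$ on $L^\infty(\Omega)$. You instead work directly on the predual side: Banach--Steinhaus gives $\sup_n\|q_n\|_\infty<\infty$, Rellich--Kondrachov makes the unit ball of $W_0^{1,p'}(\Omega)$ totally bounded in $L^1(\Omega)$, and a finite $\epsilon$-net argument then upgrades pointwise weak-$*$ convergence to uniform convergence on that ball, which is exactly $W^{-1,p}$-norm convergence. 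Your route is slightly more self-contained (it re-proves the ``compactness turns weak-$*$ into strong'' principle rather than citing it), avoids the detour through weak $L^p$ convergence, and avoids the final verification that the abstractly constructed operator coincides with $I$; the paper's route is shorter if one is willing to quote the abstract theorem. Both are fully rigorous.
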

\begin{proof}
Let $q_{n},q\in L^{\infty}(\Omega)$ be such that $q_{n}\to q$ weak{*}
in $L^{\infty}(\Omega),$ namely,
\[
\int_{\Omega}q_{n}v\,dx\to\int_{\Omega}qv\,dx,\qquad v\in L^{1}(\Omega).
\]
In particular, since $\Omega$ is bounded, we have $L^{p'}(\Omega)\subseteq L^{1}(\Omega)$,
so that
\[
\int_{\Omega}q_{n}v\,dx\to\int_{\Omega}qv\,dx,\qquad v\in L^{p'}(\Omega).
\]
In other words, $q_{n}\to q$ weakly in $L^{p}(\Omega)$. Let $\iota\colon W_{0}^{1,p'}(\Omega)\to L^{p'}(\Omega)$
be the canonical embedding, which is compact thanks to the Rellich--Kondrachov
theorem, and
\[
T\colon L^{p}(\Omega)\to\bigl(L^{p'}(\Omega)\bigr)',\qquad\langle Tf,g\rangle_{L^{p'}(\Omega)'\times L^{p'}(\Omega)}=\int_{\Omega}fg\,dx,
\]
be the canonical isomorphism. Since $\iota^{*}\colon\bigl(L^{p'}(\Omega)\bigr)'\to W^{-1,p}(\Omega)$
is compact, we have that $\iota^{*}T\colon L^{p}(\Omega)\to W^{-1,p}(\Omega)$
is compact too. As a consequence, using that $q_{n}\to q$ weakly
in $L^{p}(\Omega)$, we have $\iota^{*}Tq_{n}\to\iota^{*}Tq$ in $W^{-1,p}(\Omega)$.
It remains to show that $\iota^{*}T|_{L^{\infty}(\Omega)}=I$, namely,
that it is the canonical embedding. For every $u\in L^{\infty}(\Omega)$
and $v\in W_{0}^{1,p'}(\Omega)$ we have
\[
\begin{split}\langle\iota^{*}Tu,v\rangle_{W^{-1,p}(\Omega)\times W_{0}^{1,p'}(\Omega)} & =\langle Tu,\iota v\rangle_{L^{p'}(\Omega)'\times L^{p'}(\Omega)}\\
 & =\int_{\Omega}u\iota(v)\,dx\\
 & =\int_{\Omega}uv\,dx\\
 & =\langle Iu,v\rangle_{W^{-1,p}(\Omega)\times W_{0}^{1,p'}(\Omega)},
\end{split}
\]
as desired.
\end{proof}
\begin{lem}
\label{lem:product-0}Let $\Omega\subseteq\R^{d}$ be a bounded Lipschitz
domain. The map 
\[
(u,v)\in H^{1}(\Omega)\times H_{0}^{1}(\Omega)\mapsto uv\in W_{0}^{1,(d+1)'}(\Omega)
\]
 is well-defined and bounded.
\end{lem}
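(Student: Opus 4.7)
The plan is to verify the lemma by a direct estimate via H\"older's inequality combined with the Sobolev embedding, followed by a density argument to secure the zero-trace requirement. I expect no substantive difficulty; the argument is essentially bookkeeping of exponents, with the only mildly delicate point being to identify the correct H\"older pairing that matches the available Sobolev exponent.

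First, observe that $(d+1)' = (d+1)/d$, and to show $uv \in W^{1,(d+1)/d}(\Omega)$ it is enough to control the $L^{(d+1)/d}$-norms of $uv$ and of $\nabla(uv) = u\nabla v + v\nabla u$ in terms of $\|u\|_{H^1(\Omega)}\|v\|_{H^1(\Omega)}$. The Sobolev embedding gives $H^1(\Omega) \hookrightarrow L^{p_\star}(\Omega)$ with $p_\star = 2(d+1)/(d-1)$: for $d\ge 3$ this follows because $p_\star \le 2d/(d-2)$ (equivalent to $-2 \le 0$), and for $d=2$ the critical exponent is any finite $p$, so in particular $p_\star = 6$ is admissible. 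The crucial index identity is
\[
\frac{d}{d+1} = \frac{1}{2} + \frac{d-1}{2(d+1)} = \frac{1}{2} + \frac{1}{p_\star},
\]
so H\"older's inequality and the embedding yield
\[
\|v\nabla u\|_{L^{(d+1)/d}(\Omega)} \le \|v\|_{L^{p_\star}(\Omega)} \|\nabla u\|_{L^2(\Omega)} \le C\|v\|_{H^1(\Omega)}\|u\|_{H^1(\Omega)},
\]
and symmetrically for $u\nabla v$; the same pairing, applied now to $u \in L^2(\Omega)$ and $v \in L^{p_\star}(\Omega)$, controls $\|uv\|_{L^{(d+1)/d}(\Omega)}$ as well. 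Thus the bilinear map into $W^{1,(d+1)/d}(\Omega)$ is bounded.

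Second, to obtain the vanishing trace, I would approximate $v \in H_0^1(\Omega)$ by a sequence $v_n \in C_c^\infty(\Omega)$ converging to $v$ in $H^1(\Omega)$. Each product $uv_n$ has compact support in $\Omega$, and a standard mollification (applied to $uv_n$, whose support is a fixed compact subset of $\Omega$) shows that $uv_n \in W_0^{1,(d+1)/d}(\Omega)$. By the bilinear bound from the previous step,
\[
\|uv - uv_n\|_{W^{1,(d+1)/d}(\Omega)} \le C \|u\|_{H^1(\Omega)} \|v - v_n\|_{H^1(\Omega)} \longrightarrow 0,
\]
and since $W_0^{1,(d+1)/d}(\Omega)$ is closed in $W^{1,(d+1)/d}(\Omega)$, the limit $uv$ belongs to $W_0^{1,(d+1)/d}(\Omega)$, which completes the plan.
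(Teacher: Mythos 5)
Your proof is correct and follows essentially the same strategy as the paper: Sobolev embedding plus H\"older to bound $uv$ and $\nabla(uv)=u\nabla v+v\nabla u$ in $L^{(d+1)'}$, and then the zero-trace property for membership in $W_0^{1,(d+1)'}(\Omega)$. Two small refinements over the paper's version: you select the single exponent $p_\star=2(d+1)/(d-1)$ that makes the H\"older pairing $\tfrac{1}{(d+1)'}=\tfrac{1}{2}+\tfrac{1}{p_\star}$ exact (so there is no need for the paper's separate treatment of $d=2$ and $d\ge 3$, nor for the inclusion $L^{d'}\subseteq L^{(d+1)'}$), and you justify the vanishing trace by approximating $v$ with $C_c^\infty$ functions and passing to the limit via the bilinear bound, which is more careful than the paper's one-line assertion that $v|_{\partial\Omega}=0$ implies $(uv)|_{\partial\Omega}=0$.
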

\begin{proof}
The proof is split into three steps.

\emph{Step 1: the map is well defined in the case $d\ge3$.} Take
$(u,v)\in H^{1}(\Omega)\times H_{0}^{1}(\Omega)$. By Sobolev embedding,
$u,v\in L^{\frac{2d}{d-2}}(\Omega)$. By Cauchy-Schwartz inequality,
this implies that $uv\in L^{\frac{d}{d-2}}(\Omega)$. Thus, $uv\in L^{(d+1)'}(\Omega)$
since $(d+1)'=\frac{d+1}{d}\le\frac{d}{d-2}$. 

Next, we show that $\nabla(uv)\in L^{(d+1)'}(\Omega;\R^{d})$. We
have $\nabla(uv)=u\nabla v+v\nabla u$, with $u,v\in L^{\frac{2d}{d-2}}(\Omega)$
and \textbf{$\nabla u,\nabla v\in L^{2}(\Omega;\R^{d})$}. Hölder
inequality implies that $u\nabla v,v\nabla u\in L^{p}(\Omega;\R^{d})$
where
\[
\frac{1}{p}=\frac{d-2}{2d}+\frac{1}{2}=\frac{d-1}{d}=\frac{1}{d'}.
\]
Thus $u\nabla v,v\nabla u\in L^{d'}(\Omega;\R^{d})\subseteq L^{(d+1)'}(\Omega;\R^{d})$.

Finally, since $v|_{\bo}=0$, we have that $(uv)|_{\bo}=0$, so that
$uv\in W_{0}^{1,(d+1)'}(\Omega)$.

\emph{Step 2: the map is well defined in the case $d=2$. }Take $(u,v)\in H^{1}(\Omega)\times H_{0}^{1}(\Omega)$.
By Sobolev embedding, $uv\in L^{p}(\Omega)$ for every $p\in(1,+\infty)$,
and in particular $uv\in L^{(d+1)'}(\Omega)$. Next, we show that
$\nabla(uv)\in L^{(d+1)'}(\Omega;\R^{d})$. We have $\nabla(uv)=u\nabla v+v\nabla u$,
with $u,v\in L^{p}(\Omega)$ for every $p\in(1,+\infty)$ and \textbf{$\nabla u,\nabla v\in L^{2}(\Omega;\R^{d})$}.
Thus $u\nabla v,v\nabla u\in L^{\frac{3}{2}}(\Omega;\R^{d})$, as
desired. As above, $(uv)|_{\bo}=0$.

\emph{Step 3: the map is bounded. }The boundedness of the bilinear
map $(u,v)\mapsto uv$ follows immediately from the argument above,
by using the continuity of Sobolev embeddings.
\end{proof}
\begin{lem}
\label{lem:product}Let $\Omega\subseteq\R^{d}$ be a bounded Lipschitz
domain. The map $T\colon W^{-1,d+1}(\Omega)\times H^{1}(\Omega)\to H^{-1}(\Omega)$
defined by
\[
\langle T(q,u),v\rangle_{H^{-1}(\Omega)\times H_{0}^{1}(\Omega)}=\langle q,uv\rangle_{W^{-1,d+1}(\Omega)\times W_{0}^{1,(d+1)'}(\Omega)},
\]
for $q\in W^{-1,d}(\Omega)$, $u\in H^{1}(\Omega)$ and $v\in H_{0}^{1}(\Omega)$,
is well-defined and bounded.
\end{lem}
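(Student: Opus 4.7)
The plan is to obtain Lemma~\ref{lem:product} as an essentially immediate consequence of Lemma~\ref{lem:product-0} combined with the natural duality pairing between $W^{-1,d+1}(\Omega)$ and $W_{0}^{1,(d+1)'}(\Omega)$. The only content beyond bookkeeping is to check that the formula defining $T$ indeed produces, for each fixed $(q,u)$, a bounded linear functional on $H_{0}^{1}(\Omega)$, and that the resulting assignment $(q,u)\mapsto T(q,u)$ is jointly bounded in the two arguments.

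First, I would fix $(q,u)\in W^{-1,d+1}(\Omega)\times H^{1}(\Omega)$ and $v\in H_{0}^{1}(\Omega)$. By Lemma~\ref{lem:product-0}, the product $uv$ belongs to $W_{0}^{1,(d+1)'}(\Omega)$ with the bilinear bound
\[
\|uv\|_{W^{1,(d+1)'}(\Omega)}\le C\,\|u\|_{H^{1}(\Omega)}\|v\|_{H^{1}(\Omega)},
\]
where $C$ depends only on $\Omega$. This makes the pairing $\langle q,uv\rangle_{W^{-1,d+1}\times W_{0}^{1,(d+1)'}}$ well defined, and by the duality of $W^{-1,d+1}(\Omega)$ with $W_{0}^{1,(d+1)'}(\Omega)$ one has
\[
|\langle q,uv\rangle|\le\|q\|_{W^{-1,d+1}(\Omega)}\|uv\|_{W^{1,(d+1)'}(\Omega)}\le C\,\|q\|_{W^{-1,d+1}(\Omega)}\|u\|_{H^{1}(\Omega)}\|v\|_{H^{1}(\Omega)}.
\]

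Second, linearity of the right-hand side in $v$ is clear from the linearity of the product $v\mapsto uv$ (for fixed $u$) and of the duality pairing in its second entry; combined with the above estimate, this shows that $v\mapsto\langle q,uv\rangle$ defines an element of $H^{-1}(\Omega)$, and moreover
\[
\|T(q,u)\|_{H^{-1}(\Omega)}\le C\,\|q\|_{W^{-1,d+1}(\Omega)}\|u\|_{H^{1}(\Omega)}.
\]
This yields both the well-posedness of $T$ and its boundedness as a bilinear map, completing the proof.

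There is no real obstacle here, since the analytic content (namely, that pointwise multiplication sends $H^{1}\times H_{0}^{1}$ continuously into $W_{0}^{1,(d+1)'}$) has already been absorbed into Lemma~\ref{lem:product-0}; what remains is only the straightforward transfer of that continuity through the duality bracket. The only minor care needed is to make sure that the assignment $v\mapsto uv$ is genuinely linear into $W_{0}^{1,(d+1)'}(\Omega)$ (not merely bilinear when $u$ varies as well), which is immediate from its definition as a pointwise product.
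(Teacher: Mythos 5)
Your proof is correct and follows essentially the same route as the paper: both rely on Lemma~\ref{lem:product-0} to place $uv$ in $W_0^{1,(d+1)'}(\Omega)$ with the bilinear bound, and then pass the estimate through the duality pairing to get $\|T(q,u)\|_{H^{-1}(\Omega)}\le C\|q\|_{W^{-1,d+1}(\Omega)}\|u\|_{H^{1}(\Omega)}$. The extra remark on linearity of $v\mapsto uv$ is harmless bookkeeping that the paper leaves implicit.
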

\begin{proof}
The map $T$ is well defined, because $uv\in W_{0}^{1,(d+1)'}(\Omega)$,
thanks to Lemma~\ref{lem:product-0}. We now show that $T$ is bounded.
By using again Lemma~\ref{lem:product-0} we readily obtain
\[
\begin{split}\|T(q,u)\|_{H^{-1}(\Omega)} & =\sup_{\|v\|_{H_{0}^{1}(\Omega)=1}}|\langle q,uv\rangle_{W^{-1,d+1}(\Omega)\times W_{0}^{1,(d+1)'}(\Omega)}|\\
 & \le\sup_{\|v\|_{H_{0}^{1}(\Omega)=1}}\|q\|_{W^{-1,d+1}(\Omega)}\|uv\|_{W_{0}^{1,(d+1)'}(\Omega)}\\
 & \le C\|q\|_{W^{-1,d+1}(\Omega)}\sup_{\|v\|_{H_{0}^{1}(\Omega)=1}}\|u\|_{H^{1}(\Omega)}\|v\|_{H_{0}^{1}(\Omega)}\\
 & =C\|q\|_{W^{-1,d+1}(\Omega)}\|u\|_{H^{1}(\Omega)},
\end{split}
\]
as desired.
\end{proof}
We are now ready to prove Theorem~\ref{thm:quantitative runge}.
\begin{proof}[Proof of Theorem \ref{thm:quantitative runge}]
The proof is split into several steps. For $\Omega'\subseteq\Omega$,
we use the notation $H_{L}^{1}(\Omega')=\{u\in H^{1}(\Omega'):Lu=0\text{ in }\Omega'\}$.
Note that $H_{L}^{1}(\Omega')$ is a vector subspace of $H^{1}(\Omega')$
because $L$ is linear, namely, (\ref{eq:PDE}) is a linear PDE.

\emph{Step 1: without loss of generality we can assume $\|h\|_{H^{1}(D)}=1$.}
Suppose that the result is true if $\|h\|_{H^{1}(D)}=1$. Observe
that, since $f$ is positively homogeneous, (\ref{eq:thesis}) is
equivalent to 
\begin{equation}
\left\Vert \frac{h}{\|h\|_{H^{1}(D)}}-\frac{u|_{D}}{\|h\|_{H^{1}(D)}}\right\Vert _{L^{2}(D)}\le\epsilon,\qquad f\left(\frac{u}{\|h\|_{H^{1}(D)}}\right)\le C.\label{eq:thesis-1}
\end{equation}
Since the PDE under consideration is linear, $\frac{h}{\|h\|_{H^{1}(D)}}$
is still a local solution in $D$, and so there exists $\tilde{u}\in H_{L}^{1}(\Omega)$
such that
\[
\|\frac{h}{\|h\|_{H^{1}(D)}}-\tilde{u}|_{D}\|_{L^{2}(D)}\le\epsilon,\qquad f(\tilde{u})\le C.
\]
Then $u=\|h\|_{H^{1}(D)}\tilde{u}\in H_{L}^{1}(\Omega)$ and satisfies
(\ref{eq:thesis-1}), as desired.\medskip{}

\emph{Step 2: a proof by contradiction.} By contradiction, assume
that such constant $C>0$ does not exist. Thus, for every $n\in\N$
there exist $a_{n}\in W^{1,\infty}\left(\Omega;\mathbb{R}^{d\times d}\right)$
and $q_{n}\in L^{\infty}(\Omega;\R)$ satisfying $a_{n}^{T}=a_{n}$
and (\ref{eq:coercive}), (\ref{eq:a-Lambda}), (\ref{eq:q-Lambda})
and (\ref{eq:0-eig}), $D_{n}$ as in the statement and $h_{n}\in H_{L_{n}}^{1}(D_{n})$,
where $L_{n}u=-\div(a_{n}\nabla u)+q_{n}u$, satisfying $\|h_{n}\|_{H^{1}(D_{n})}=1$
such that
\begin{equation}
\forall u\in H_{L_{n}}^{1}(\Omega)\quad\left(f(u|_{\bo})\le n\implies\|u|_{D_{n}}-h_{n}\|_{L^{2}(D_{n})}>\epsilon\right).\label{eq:contradiction}
\end{equation}
In the rest of the proof, we will consider several subsequences: in
order to simplify the exposition, with an abuse of notation, we will
never denote them.\medskip{}

\emph{Step 3: $q_{n}\to q$ weak{*} in $L^{\infty}(\Omega)$.} Viewing
$L^{\infty}(\Omega)$ as the dual of $L^{1}(\Omega)$, by the Banach--Alaoglu
theorem the closed ball $\overline{B_{L^{\infty}(\Omega)}(0,\Lambda)}$
is compact with respect to the weak{*} topology. Since $\overline{B_{L^{\infty}(\Omega)}(0,\Lambda)}$
is metrisable and $q_{n}\in\overline{B_{L^{\infty}(\Omega)}(0,\Lambda)}$
by (\ref{eq:q-Lambda}), there exist $q\in\overline{B_{L^{\infty}(\Omega)}(0,\Lambda)}$
and a subsequence of $(q_{n})$ such that $q_{n}\to q$ weak{*} in
$L^{\infty}(\Omega)$, namely,
\begin{equation}
\int_{\Omega}q_{n}v\,dx\to\int_{\Omega}qv\,dx,\qquad v\in L^{1}(\Omega).\label{eq:qn-to-q-wstar}
\end{equation}
(This is the so-called sequential Banach--Alaoglu theorem.)\medskip{}

\emph{Step 4: $a_{n}\to a$ in $L^{\infty}(\Omega;\R^{d\times d})$.}
Recall that $a_{n}\in W^{1,\infty}(\Omega;\R^{d\times d})$ and, by
(\ref{eq:a-Lambda}), $\|a_{n}\|_{W^{1,\infty}(\Omega)}\le\Lambda$
for every $n$. By the Ascoli--Arzelà theorem, there exists $a\in W^{1,\infty}(\Omega;\R^{d\times d})$
such that $\|a\|_{W^{1,\infty}(\Omega)}\le\Lambda$ and $a_{n}\to a$
in $L^{\infty}(\Omega;\R^{d\times d})$. In particular, since $a_{n}$
satisfies $a_{n}^{T}=a_{n}$ and (\ref{eq:coercive}) for every $n\in\N$,
then $a^{T}=a$ and $a$ satisfies (\ref{eq:coercive}) too.

Now that we have found limit points for $(q_{n})$ and $(a_{n})$,
let us denote $Lu:=-\div(a\nabla u)+qu$.\medskip{}

\emph{Step 5: $0$ is not a Dirichlet eigenvalue of $L$ in $\Omega$
and $L$ satisfies (\ref{eq:0-eig}).} Let $F\in H^{-1}(\Omega)$,
and let $u\in H_{0}^{1}(\Omega)$ be a solution to
\[
-\div(a\nabla u)+qu=F.
\]
This problem may be rewritten as
\[
-\div(a_{n}\nabla u)+q_{n}u=\div((a-a_{n})\nabla u)+(q_{n}-q)u+F.
\]
Since $L_{n}$ satisfies (\ref{eq:0-eig}), by Lemma~\ref{lem:product}
we have
\[
\begin{split}\|u & \|_{H^{1}(\Omega)}\le\Lambda\|\div((a-a_{n})\nabla u)+(q_{n}-q)u+F\|_{H^{-1}(\Omega)}\\
 & \le\Lambda\left(\|(a-a_{n})\nabla u\|_{L^{2}(\Omega)}+\|(q_{n}-q)u\|_{H^{-1}(\Omega)}+\|F\|_{H^{-1}(\Omega)}\right)\\
 & \le\Lambda\left(\|a-a_{n}\|_{L^{\infty}(\Omega;\R^{d\times d})}\|\nabla u\|_{L^{2}(\Omega)}+\|I(q_{n}-q)\|_{W^{-1,d+1}(\Omega)}\|u\|_{H^{1}(\Omega)}+\|F\|_{H^{-1}(\Omega)}\right),
\end{split}
\]
where $\|a\|_{L^{\infty}(\Omega;\R^{d\times d})}=\supess_{x\in\Omega}\|a(x)\|_{2}$,
$\|A\|_{2}$ denotes the operator norm of the matrix $A\in\R^{d\times d}$
and $I\colon L^{\infty}(\Omega)\to W^{-1,d+1}(\Omega)$ is the canonical
embedding (see Lemma~\ref{lem:weak*}). Note that $\|a-a_{n}\|_{L^{\infty}(\Omega;\R^{d\times d})}\to0$
by Step 4. Furthermore, $q_{n}\to q$ weak{*} in $L^{\infty}(\Omega)$
by Step 3, so that $\|I(q_{n}-q)\|_{W^{-1,d+1}(\Omega)}\to0$ by Lemma~\ref{lem:weak*}
with $p=d+1$. Altogether, we obtain
\[
\|u\|_{H^{1}(\Omega)}\le\Lambda\|F\|_{H^{-1}(\Omega)}.
\]
This shows that, if $F=0$, then $u=0$, so that $0$ is not a Dirichlet
eigenvalue of $L$. Furthermore, $L$ satisfies (\ref{eq:0-eig}).\medskip{}

\emph{Step 6: $D_{n}\to D$ with respect to the Hausdorff distance
and the volume distance.} By Blaschke's selection theorem \cite{1940-price},
there exists a convex domain $D\subseteq\Omega$ such that
\begin{equation}
d_{H}(D_{n},D)\to0,\label{eq:Hausdorff D_n to D}
\end{equation}
where $d_{H}$ denotes the Hausdorff distance and is defined by
\[
d(X,Y)=\max\left(\sup_{x\in X}d(x,Y),\sup_{y\in Y}d(y,X)\right).
\]
For later use, we also point out that in this setting the convergence
with respect to the Hausdorff distance is equivalent to the convergence
in the volume distance, namely, 
\begin{equation}
|(D_{n}\setminus D)\cup(D\setminus D_{n})|\to0,\label{eq:volume distance}
\end{equation}
see \cite{shephard-webster-1965}, where $|\cdot|$ denotes the Lebesgue
measure. In particular, by (\ref{eq:hypotheses D}) we have that $|D|\ge\Lambda^{-1}$
and that $D\Subset\Omega$.\medskip{}

\emph{Step 7: $\tilde{h}_{n}\to h$ weakly in $H^{1}(\Omega)$ with
$h|_{D}\in H_{L}^{1}(D)$.} In view of the classical extension theorem
for Sobolev spaces (see, e.g., \cite[Chapter VI,  Section 3]{stein-1970}
and \cite[Lemma 6.4]{burenkov-1999}), we can extend $h_{n}\in H^{1}(D_{n})$
to $\tilde{h}_{n}\in H^{1}(\Omega)$. Since $\|h_{n}\|_{H^{1}(D)}\le1$
for every $n\in\N$, by (\ref{eq:hypotheses D}) we have $\|\tilde{h}_{n}\|_{H^{1}(\Omega)}\le E$,
for some $E>0$ independent of $n$. Therefore, using again the Banach--Alaoglu
theorem, we obtain that there exists $h\in H^{1}(\Omega)$ such that
\begin{equation}
\tilde{h}_{n}\to h\quad\text{weakly in}\quad H^{1}(\Omega).\label{eq:hn-to-h-H1}
\end{equation}
By the Rellich--Kondrachov theorem, 
\begin{equation}
\tilde{h}_{n}\to h\quad\text{in}\quad L^{2}(\Omega).\label{eq:hn-to-h-L2}
\end{equation}
We now claim that $h|_{D}\in H_{L}^{1}(D).$ Recall that $h_{n}\in H_{L_{n}}^{1}(D_{n})$
for every $n\in\N,$ namely,
\begin{equation}
\int_{D_{n}}a_{n}\nabla h_{n}\cdot\nabla v\,dx+\int_{D_{n}}q_{n}h_{n}v\,dx=0,\qquad v\in C_{c}^{\infty}(D_{n}).\label{eq:weak-hn}
\end{equation}
We need to show that
\begin{equation}
\int_{D}a\nabla h\cdot\nabla v\,dx+\int_{D}qhv\,dx=0,\qquad v\in C_{c}^{\infty}(D).\label{eq:weak-h}
\end{equation}
Take $v\in C_{c}^{\infty}(D)$. Thus, by (\ref{eq:Hausdorff D_n to D}),
using that $D_{n}$ and $D$ are Lipschitz domains, in view of \cite[eqn. (1.5) and Lemma 7.4]{savare-schimperna-2002}
there exists $n_{0}\in\N$ such that $\supp v\subseteq D_{n}$ for
all $n\ge n_{0}$. Hence, for $n\ge n_{0}$ we have
\[
\begin{split} & \left|\int_{D_{n}}q_{n}h_{n}v\,dx-\int_{D}qhv\,dx\right|=\left|\int_{D}q_{n}\tilde{h}_{n}v\,dx-\int_{D}qhv\,dx\right|\\
 & \qquad\le\left|\int_{D}q_{n}\tilde{h}_{n}v\,dx-\int_{D}q_{n}hv\,dx\right|+\left|\int_{D}q_{n}hv\,dx-\int_{D}qhv\,dx\right|\\
 & \qquad\le\int_{D}|\tilde{h}_{n}-h||q_{n}v|\,dx+\left|\int_{D}(q_{n}-q)hv\,dx\right|\\
 & \qquad\le\|q_{n}\|_{L^{\infty}(D)}\|\tilde{h}_{n}-h\|_{L^{2}(D)}\|v\|_{L^{2}(D)}+\left|\int_{\Omega}(q_{n}-q)h\tilde{v}\,dx\right|,
\end{split}
\]
where $\tilde{v}\in L^{2}(\Omega)$ is the extension by zero of $v$.
The first of these two factors goes to zero thanks to (\ref{eq:hn-to-h-L2})
and (\ref{eq:q-Lambda}), while the second factor goes to zero by
(\ref{eq:qn-to-q-wstar}), since $h\tilde{v}\in L^{1}(\Omega)$. Therefore
\begin{equation}
\left|\int_{D_{n}}q_{n}h_{n}v\,dx-\int_{D}qhv\,dx\right|\to0\quad\text{as}\;n\to+\infty.\label{eq:limit1}
\end{equation}

Next, we consider the leading order term. Since the matrices $a$
and $a_{n}$ are symmetric, for $n\ge n_{0}$ we have
\[
\begin{split} & \left|\int_{D_{n}}a_{n}\nabla h_{n}\cdot\nabla v\,dx-\int_{D}a\nabla h\cdot\nabla v\,dx\right|\\
 & =\left|\int_{D}a_{n}\nabla\tilde{h}_{n}\cdot\nabla v\,dx-\int_{D}a\nabla h\cdot\nabla v\,dx\right|\\
 & \le\left|\int_{D}a_{n}\nabla\tilde{h}_{n}\cdot\nabla v\,dx-\int_{D}a\nabla\tilde{h}_{n}\cdot\nabla v\,dx\right|+\left|\int_{D}a\nabla\tilde{h}_{n}\cdot\nabla v\,dx-\int_{D}a\nabla h\cdot\nabla v\,dx\right|\\
 & \le\int_{D}\left|(a_{n}-a)\nabla\tilde{h}_{n}\cdot\nabla v\right|\,dx+\left|\int_{\Omega}(\nabla\tilde{h}_{n}-\nabla h)\cdot a\nabla\tilde{v}\,dx\right|\\
 & \le\int_{D}\|a_{n}(x)-a(x)\|_{2}|\nabla\tilde{h}_{n}(x)|\cdot|\nabla v(x)|\,dx+\left|\langle\nabla\tilde{h}_{n}-\nabla h,a\nabla\tilde{v}\rangle_{L^{2}(\Omega)}\right|\\
 & \le\|a_{n}-a\|_{L^{\infty}(D;\R^{d\times d})}\|\tilde{h}_{n}\|_{H^{1}(\Omega)}\|v\|_{H^{1}(D)}+\left|\langle\nabla(\tilde{h}_{n}-h),a\nabla\tilde{v}\rangle_{L^{2}(\Omega)}\right|.
\end{split}
\]
Using that $\|\tilde{h}_{n}\|_{H^{1}(\Omega)}\le E$ for every $n$
and that $a_{n}\to a$ in $L^{\infty}(\Omega;\R^{d\times d})$, we
obtain that the first factor goes to $0$ as $n\to+\infty$. The second
term goes to zero thanks to (\ref{eq:hn-to-h-H1}). Therefore
\begin{equation}
\left|\int_{D_{n}}a_{n}\nabla h_{n}\cdot\nabla v\,dx-\int_{D}a\nabla h\cdot\nabla v\,dx\right|\to0\quad\text{as}\;n\to+\infty.\label{eq:limit2}
\end{equation}
Finally, (\ref{eq:weak-h}) follows directly by (\ref{eq:weak-hn}),
(\ref{eq:limit1}) and (\ref{eq:limit2}).\medskip{}

\emph{Step 8: there exists $u\in H_{L}^{1}(\Omega)$ such that $f(u|_{\bo})<+\infty$
and $\|u|_{D}-h|_{D}\|_{L^{2}(D)}\le\frac{\epsilon}{4}$.} Note that
$L$ and $D$ satisfy all the assumptions of Proposition~\ref{prop:runge},
because $\Omega\setminus\overline{D}$ is connected (since $D$ is
convex and $D\Subset\Omega$). Thus, there exists $u'\in H_{L}^{1}(\Omega)$
such that $\|u'|_{D}-h|_{D}\|_{L^{2}(D)}\le\epsilon/8$. By density
of $f^{-1}([0,+\infty))$ in $H^{1/2}(\bo)$, there exist $g_{k}\in H^{1/2}(\bo)$
such that $f(g_{k})<+\infty$ and $g_{k}\to u'|_{\bo}$ in $H^{1/2}(\bo)$.
Let $u_{k}\in H^{1}(\Omega)$ be the unique solution to
\[
\left\{ \begin{array}{ll}
Lu_{k}=0 & \text{in}\;\Omega,\\
u_{k}=g_{k} & \text{on}\;\bo.
\end{array}\right.
\]
Then $u_{k}\to u'$ in $H^{1}(\Omega)$ because $L^{-1}$ is continuous.
Take $k_{0}\in\N$ such that $\|u_{k_{0}}-u'\|_{H^{1}(\Omega)}\le\epsilon/8$,
so that
\[
\|u_{k_{0}}|_{D}-h|_{D}\|_{L^{2}(D)}\le\|u_{k_{0}}|_{D}-u'|_{D}\|_{L^{2}(D)}+\|u'|_{D}-h|_{D}\|_{L^{2}(D)}\le\epsilon/4.
\]
It is enough to set $u=u_{k_{0}}$.\medskip{}

\emph{Step 9: $\|h-u\|_{L^{2}(D_{n}\setminus D)}\to0$.} Set $w=h-u\in H^{1}(\Omega)$.
By Sobolev embedding, we have $w\in L^{q}(\Omega)$ for some $q>2$
(depending on $d$). By Hölder inequality, setting $p=q/2>1$, we
obtain
\[
\|w\|_{L^{2}(D_{n}\setminus D)}^{2}\le\|\chi_{D_{n}\setminus D}\|_{L^{p'}(\Omega)}\||w|^{2}\|_{L^{p}(\Omega)}=|D_{n}\setminus D|^{\frac{1}{p'}}\|w\|_{L^{q}(\Omega)}^{2},
\]
where $p'=\frac{p}{p-1}\in(1,\infty)$. The right hand side goes to
zero by (\ref{eq:volume distance}), whence $\|h-u\|_{L^{2}(D_{n}\setminus D)}\to0$.\medskip{}

\emph{Step 10: $u_{n}\to u$ in $H^{1}(\Omega)$, where $u_{n}\in H_{L_{n}}^{1}(\Omega)$.}
For $n\in\N$, let $u_{n}\in H^{1}(\Omega)$ be the unique solution
to the Dirichlet boundary value problem
\[
\left\{ \begin{array}{ll}
L_{n}u_{n}=0 & \text{in}\;\Omega,\\
u_{n}=u & \text{on}\;\bo.
\end{array}\right.
\]
Set $v_{n}=u_{n}-u\in H_{0}^{1}(\Omega)$. We have $L_{n}v_{n}=-L_{n}u=(L-L_{n})u$.
Arguing as in Step 5, we obtain that
\[
\|v_{n}\|_{H^{1}(\Omega)}\le\Lambda\left(\|a-a_{n}\|_{L^{\infty}(\Omega;\R^{d\times d})}\|\nabla u\|_{L^{2}(\Omega)}+\|I(q_{n}-q)\|_{W^{-1,d+1}(\Omega)}\|u\|_{H^{1}(\Omega)}\right),
\]
where the right hand side goes to $0$ as $n\to+\infty$, as desired.\medskip{}

\emph{Step 11: The contradiction.} Recall that, by (\ref{eq:hn-to-h-L2}),
step 9 and step 10 we have $\|\tilde{h}_{n}-h\|_{L^{2}(\Omega)}\to0$,
$\|h-u\|_{L^{2}(D_{n}\setminus D)}\to0$ and $\|u-u_{n}\|_{H^{1}(\Omega)}\to0$.
Choose $\bar{n}\in\N$ such that $f(u|_{\bo})\le\bar{n}$ and
\[
\|\tilde{h}_{\bar{n}}-h\|_{L^{2}(\Omega)}\le\epsilon/4,\qquad\|h-u\|_{L^{2}(D_{\bar{n}}\setminus D)}\le\epsilon/4,\qquad\|u-u_{\bar{n}}\|_{H^{1}(\Omega)}\le\epsilon/4.
\]
Since $f(u_{\bar{n}}|_{\bo})=f(u|_{\bo})\le\bar{n}$, by (\ref{eq:contradiction})
we have $\|h_{\bar{n}}-u_{\bar{n}}|_{D_{\bar{n}}}\|_{L^{2}(D_{\bar{n}})}>\epsilon$.
By Step 8 we have
\[
\begin{split}\|h|_{D_{\bar{n}}}-u|_{D_{\bar{n}}}\|_{L^{2}(D_{\bar{n}})}^{2} & \le\|h|_{D}-u|_{D}\|_{L^{2}(D)}^{2}+\|h|_{D_{\bar{n}}\setminus D}-u|_{D_{\bar{n}}\setminus D}\|_{L^{2}(D_{\bar{n}}\setminus D)}^{2}\\
 & \le2(\epsilon/4)^{2}\\
 & \le(\epsilon/2)^{2},
\end{split}
\]
whence, by the triangle inequality, we obtain
\[
\|h_{\bar{n}}-u_{\bar{n}}|_{D_{\bar{n}}}\|_{L^{2}(D_{\bar{n}})}\le\|\tilde{h}_{\bar{n}}-h\|_{L^{2}(\Omega)}+\|h|_{D_{\bar{n}}}-u|_{D_{\bar{n}}}\|_{L^{2}(D_{\bar{n}})}+\|u-u_{\bar{n}}\|_{L^{2}(\Omega)}\le\epsilon,
\]
a contradiction. This concludes the proof.
\end{proof}
\bibliographystyle{abbrv}
\bibliography{book}

\end{document}